\newtheorem{remark}{Remark}
\newtheorem*{theorem*}{Theorem}
\newtheorem{theorem}{Theorem}[section]
\newtheorem{corollary}{Corollary}[theorem]
\newtheorem{lemma}[theorem]{Lemma}
\numberwithin{subcase}{case}
\newcounter{Case}[theorem]
\tikzset{node distance=2cm, auto}
\title{On the Uniqueness of Sarkar-Seed-Szabo Construction}
\author{Pravakar Paul}
\newcommand{\Addresses}{{% additional braces for segregating \footnotesize
  \bigskip
  \footnotesize

  Pravakar Paul, \textsc{Department of Mathematics, The University Of Iowa,
    14 MacLean Hall, Iowa 52246}\par\nopagebreak
  \textit{E-mail address}, P.~Paul: \texttt{pravakar-paul@uiowa.edu}

 % \medskip

  %Benjamin Cooper, \textsc{Department of Mathematics, The University Of Iowa,
  %  14 MacLean Hall, Iowa 52246
  % }\par\nopagebreak
  %\textit{E-mail address}, M.~Dane: \texttt{ben-cooper@uiowa.edu}

}}
\date{}
\begin{document}

\maketitle
\begin{abstract}
\noindent In an attempt to consolidate Szabo's geometric chain complex and Bar\hyp Natan's chain complex, Sarkar-Seed-Szabo defined a total complex $CTot(L)$ over $\mathbb{F}_{2}$ by adding some extra terms $\{h_{i} \}_{i=2}^{\infty}$ in the differential. In this paper we prove the uniqueness of $\{h_{i} \}_{i=2}^{\infty}$ upto some properties. This in turn implies that the total complex $CTot(L)$ is unique.
\end{abstract}
\section{Introduction}
Khovanov categorified the Jones polynomial by constructing the invariant which is today called Khovanov Homology in \cite{MR1740682}. Since this time many variants of this foundational construction, called link homology theories have been studied, including Lee homology \cite{MR2173845}, Bar-Natan homology (\S 9.3 \cite{MR2174270}), Szabo's geometric perturbation \cite{MR3431667}, et cetera, in addition to many Floer theoretic constructions see \cite{MR2141852}, \cite{MR2764887}, \cite{MR2805599}. Each of these constructions can be viewed as a special case of a  general idea. Given a chain complex $(C,d)$, we can study its deformations by introducing a perturbed differential $d+\epsilon$. For $(C,d+\epsilon)$ to again be a chain complex, the Maurer\hyp Cartan equation must be satisfied: 
\begin{equation*}
[d:\epsilon]=\epsilon^2 
\end{equation*}

\noindent For a link homology theory $(C(L),d_{L}),$ it is natural for the deformation $\epsilon:= \epsilon_{L}$ to depend on the link $L$ in such a manner that the chain complex \[ (C(L), d_{L}+ \epsilon_{L} ) \] remains invariant under the Reidemeister moves. Such constructions give rise to spectral sequences (see \cite{MR2729272}, \cite{MR2822178}, \cite{MR3509974}, \cite{MR4092306}). Since there are many deformations of Khovanov homology, one can ask whether there are relations among them. Sarkar, Seed and Szabo studied the Bar-Natan homology and Szabo's geometric perturbation, producing a combination theory in \cite{MR3692911}. The existence of such a construction constitutes an example of one such relation. \\ \\
%%%%They combined Bar-Natan homology and Szabo's geometric perturbation into a single coherent theory.%%%%%%%%
Both  Bar-Natan homology and Szabo's geometric perturbation can be retrieved from this combination theory by specialization. In more detail, if $(CKh(L), d_{1})$ denotes the Khovanov complex of a link and $H$ is a formal variable in $(gr_{h},gr_{q})$-bigrading $(0,-2)$ then,  Bar-Natan's chain complex is defined as:
\begin{equation}
    CBN(L) :=  \left (CKh(L) \otimes_{\mathbb{Z}} \mathbb{F}_{2}[H], \delta_{BN}=d_1+Hh_1 \right )
\end{equation}
\noindent where $\delta_{BN}$ is a differential with bigrading $(1,0)$.  
Szabo's geometric perturbation can also be stated with similar language. If $W$ is a formal variable in $(gr_{h},gr_{q})$ bigrading $(-1,-2)$ then, Szabo's geometric chain complex is defined as:
\begin{equation}
    CSz(L):= \left ( CKh(L) \otimes_{\mathbb{Z}} \mathbb{F}_{2}[W], \delta_{Sz}=\sum_{i=1}^{\infty} d_{i} W^{i-1} \right )
\end{equation}
\noindent where $\delta_{Sz}$ is a differential with bigrading $(1,0)$. 
In \cite{MR3692911} Sarkar, Seed and Szabo extended the Bar-Natan differential $h_1$ to construct an $(i,2i)$-graded endomorphisms $h_i$ for all $i \geq 2 $. They defined a new chain complex: 
\begin{equation}
 CTot(L):=  \left (CKh(L) \otimes_{\mathbb{Z}} \mathbb{F}_{2}[H,W],\delta_{Tot}= \sum_{i=1}^{\infty} d_iW^{i-1} + \sum_{i=1}^{\infty} HW^{i-1}h_{i} \right )    
\end{equation}

\noindent where the differential $\delta_{Tot}$ again has bigrading $(1,0)$. Thus, by setting $W=0$ or $H=0$, we can retrieve the Bar-Natan complex $ \left(CBN(L), \delta_{BN} \right )$ or Szabo's complex $ \left (CSz(L), \delta_{Sz} \right )$ respectively. We have the following diagram: 
\begin{center}
\begin{tikzpicture}
\node at (0,0) {$CTot(L)$}; 
\node at (-2,-2) {$CBN(L)$};
\node at (2,-2) {$CSz(L)$};
\node at (0,-4) {$CKh(L)$};
\draw[-stealth] (0,-0.25)--(-2,-1.75);
\draw[-stealth] (0,-0.25)--(2,-1.75);
\draw[-stealth] (-2,-2.25)--(0,-3.75);
\draw[-stealth] (2,-2.25)--(0,-3.75);
\node at (-1.75,-3) {$H=0$}; 
\node at (2,-3) {$W=0$}; 
\node at (2,-1) {$H=0$}; 
\node at (-1.75,-3+2) {$W=0$}; 
\end{tikzpicture}
\end{center}

\noindent The existence of a combination theory begs the question of uniqueness of such construction. Does there exist $h_{i}'$ different than $h_i$ producing a separate combination theory $CTot^{'}(L)$? 
In this paper, we prove the uniqueness of the maps $h_{i}$, for $i \geq 2$. We show the map $h_i$ is unique under certain hypotheses which were used by Sarkar, Seed and Szabo in their original construction. It also raises the question of whether these hypotheses are sufficient for the uniqueness of such combination theories. This result is a corollary of our theorem below.  \\ 

\begin{theorem*}
For any  $k$-dimensional resolution configuration $C$, and any associated map $F_{C}: V_0(C) \to V_1(C)$ of bidegree $(k,2k)$ satisfying the naturality, the disoriented, the duality, the extension and the filtration rules  must agree with the Sarkar-Seed-Szabo formula. 
\end{theorem*}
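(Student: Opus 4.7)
The approach is strong induction on the dimension $k$ of the resolution configuration $C$. The base case $k=1$ is the Khovanov differential: on a single-arc configuration, the bigrading constraint $(1,2)$ together with the filtration rule leaves only the standard merge and split maps as candidates, and naturality fixes the coefficients, recovering the Sarkar-Seed-Szabo formula. For the inductive step I fix $k \geq 2$ and a $k$-dimensional configuration $C$, and study the space $\mathcal{F}(C)$ of bidegree $(k,2k)$ maps $V_0(C) \to V_1(C)$ satisfying all five rules.

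First I would apply the naturality rule to reduce to the case in which $C$ is \emph{active}, meaning every circle of $C$ is incident to at least one arc. If $C = C_{\mathrm{act}} \sqcup C_{\mathrm{pas}}$ with $C_{\mathrm{pas}}$ a disjoint union of untouched circles, naturality forces $F_C = F_{C_{\mathrm{act}}} \otimes \mathrm{Id}$, so the problem reduces to the active component. Next I would use the filtration rule in combination with the bigrading $(k,2k)$ to enumerate the finite list of potentially nonzero monomial components of $F_C$: the filtration bound restricts the allowable number of $x$ and $\mathbf{1}$ generators in the input and output, and together with the grading shift this leaves only finitely many pairs $(v_0, v_1) \in V_0(C) \times V_1(C)$ on which $F_C$ can be nonzero, depending only on the combinatorial type of $C$ (the circle counts before and after, and the arc-circle incidence data).

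On this finite list I would then apply the duality rule to pair up mirror-symmetric coefficients and cut the number of independent unknowns roughly in half, and the disoriented rule to impose further linear constraints tying coefficients of different monomial components together. The extension rule is the key mechanism that propagates the inductive hypothesis: when an active $k$-dimensional configuration $C$ admits an expression in terms of the attachment of an elementary piece to a lower-dimensional configuration $C'$, the extension rule equates certain coefficients of $F_C$ with coefficients of $F_{C'}$, which are determined by the inductive hypothesis to agree with the Sarkar-Seed-Szabo formula.

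The main obstacle, and the place where the proof will spend most of its energy, is verifying that the extension rule — together with the coefficients pinned down by the filtration, duality, and disoriented rules — is sufficient to cover every active $k$-dimensional configuration. Concretely, I need to show that every active $C$ of dimension $k$ can be reduced by a sequence of extensions to irreducible building blocks whose $F$-values have already been pinned down uniquely, and that no independent free parameter survives in $\mathcal{F}(C)$. This will require a careful case analysis on the combinatorial type of $C$, particularly on active configurations whose arcs connect several components in intricate patterns where no single extension move is immediately available; showing that in each such case the system of constraints is tight (has a unique solution matching the Sarkar-Seed-Szabo formula) is the crux of the argument.
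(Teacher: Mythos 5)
Your plan correctly identifies the overall skeleton (induction on $k$, reduce to active configurations via the extension rule, constrain coefficients via bigrading plus filtration plus duality), and the step of reducing to the active part is exactly what the paper does with the extension rule. But the mechanism you propose for the inductive step does not work, and it is precisely at the point you yourself flag as "the main obstacle."

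The gap: you write that the extension rule is "the key mechanism that propagates the inductive hypothesis," by expressing an active $k$-dimensional configuration as "the attachment of an elementary piece to a lower-dimensional configuration $C'$." But the extension rule in this paper only says $F_C(a\cdot v)=F_{C_0}(a)\cdot v$ where $C_0$ is the active part of $C$; it removes passive circles, nothing more. For a \emph{connected active} $C$ it is a tautology and yields no dimension reduction at all. There is no hypothesis in the paper's rule list that directly equates coefficients of $F_C$ with coefficients of any lower-dimensional $F_{C'}$. So once you have restricted to connected active configurations, you have no inductive lever, and the proposal stalls.

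The paper instead produces the dimension drop by \emph{surgery}: when $F_C(x)\neq 0$ for a monomial $x$ with $\prod x_i \neq x \neq 1$, connectedness of $C$ gives an arc joining a circle $x_k$ dividing $x$ to a circle $x_l$ not dividing $x$; performing surgery along that arc yields an $(n-1)$-dimensional configuration $C'$ with the \emph{same} ending circles. Precomposing with the degree-$(1,2)$ substitution $x\mapsto x_k^{-1}x$ turns $F_C$ into a nonzero map on $C'$ of bidegree $(n-1,p-2)$ that inherits the filtration and naturality rules, and a separate lemma (the paper's Lemma 3.2, which you would also need) upgrades it to satisfy duality as well, contradicting minimality. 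This surgery move, together with the observation that when $C$ has only one starting circle (or dually one ending circle) the bigrading and filtration constraints alone force $F_C=0$, is the actual crux. You should also note that the induction has to be carried out for all bidegrees $(n,p)$ with $p\geq 2n$, not just $p=2n$, because the reduced map lives in bidegree $(n-1,p-2)$; the paper handles this via a lemma about trees/dual trees at bidegree $p>2n$ as well. Finally, the paper's base case is $k=2$ (the eight connected $2$-dimensional configurations), not $k=1$; $k=1$ does not need to be reproved since the theorem concerns the extended terms $h_i$ for $i\geq 2$.
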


% \begin{theorem}
 %If $F_{C}$ does not satisfy Filtration Rule there is $h_{2}'$ %different than $h_{2}$ in Sarkar-Seed-Szabo's construction %satisfying \[[h_1:h_2']=0 , [h_1:d_2]+[h_2':d_1]=0 \]
% \end{theorem}% 
 
%\noindent Thus, theorem 2 shows that we can not get rid of the %\textit{Filtration Rule} for the uniqueness statement. 
 
\section*{Acknowledgements} 
I would like to thank my advisor Benjamin Cooper for suggesting the problem.

\section{Notations and terminology}
In this section we  define the notion of resolution configuration. We also explain how resolution configurations and the associated maps induce perturbations on the Khovanov complex. We list the rules or properties satisfied by Bar\hyp Natan's or Szabo's perturbation. These rules will be assumed in the main theorem. We will show that the rules determine the extended terms $h_{i}$ uniquely. 

%%\subsection*{Resolution Configuration} 
\noindent A $k$\hyp dimensional \textbf{resolution configuration} is a set $C$ of disjoint circles $x_1,..,x_t$ in $S^{2}$ together with $k$ embedded arcs $\gamma_1,...,\gamma_{k}$, with the properties that 
\begin{enumerate}
    \item The arcs are disjoint from each other. 
    \item The endpoints of the arcs lie on the circles. 
    \item The inside of the arcs are disjoint from the circles. 
\end{enumerate} 
A resolution configuration is said to be \textbf{oriented} if the embedded arcs are oriented.
Given a resolution configuration $C=(x_1,..,x_t,\gamma_1,...,\gamma_{k})$ a $\textbf{decoration}$ is a choice of an orientation of all the arcs.  \\ 

\begin{tikzpicture}
\draw[blue] (-1,-1)--(1,1);
\draw[blue] (1,-1)--(0.1,-0.1);
\draw[blue] (-0.1,0.1)--(-1,1);
\draw[blue] (2+3,1) to [out=-30, in=30] (2+3,-1);
\draw[blue] (4+3,1) to [out=210, in=-210] (4+3,-1);
\draw[blue, ->] (5.5,0)--(6.5,0);
\node[above] at (6,0) {$\gamma$};
\node[above] at (6,-1.5) {$0$};
\node[above] at (11,-1.5) {$1$};
\draw[blue] (10,1) to [out=-30, in=-150] (12,1);
\draw[blue] (10,-1) to [out=30, in=150] (12,-1);
\draw[blue, ->] (11,-0.5-0.2)--(11,0.70);
\node[above] at (11.3,0) {$\gamma^{*}$};

\node[above] at (6,-3) {Figure $1$.Dual resolution configuration};
\end{tikzpicture}

\noindent Given an $k$\hyp dimensional oriented resolution configuration the \textbf{dual configuration} $C^{*}= (y_1,...,y_s, \gamma_1^{*},..., \gamma_{k}^{*})$ is given by the rule that transforms the $0$\hyp resolution into $1$\hyp resolution. More precisely, the dual circles $y_i$ are constructed from the $x$\hyp circles by making surgeries along the $\gamma$\hyp arcs, and the dual arcs $\gamma_{i}^{*}$ are given by rotating $\gamma_{i}$ by $90$ degrees counterclockwise. The circles 
$\{x_i \}$ are called the \textbf{starting circles} and the circles $\{y_i\}$ are called the \textbf{ending circles}. We fix the following notations:
\begin{align*}
 V_0(C) &= \bigotimes_{i=1}^{t} \mathbb{F}_2[x_i]/(x_i^2) &  V_1(C) &= \bigotimes_{j=1}^{s} \mathbb{F}_2[y_j]/(y_j^2)
\end{align*}
where the $gr_{q}$\hyp degree of $x$ and $1$ in $\mathbb{F}_{2}[x]/(x^2)$ are given by $-1$ and $1$ respectively. A $k$\hyp dimensional resolution configuration, $C$ will induce a map $F_{C}: V_{0}(C) \to V_{1}(C)$ of homological degree $k$ in the Khovanov complex i.e. for any monomials  $x\in V_0{(C)}$ and $y\in V_1(C)$, $gr_{h}(y)-gr_{h}(x)=k$.  
\noindent We will refer the elements of $V_{0}(C)$ or $V_{1}(C)$ as monomials. Given a configuration $C$, the $x_i$\hyp circles that are disjoint from all the $\gamma$\hyp arcs are called \textbf{passive circles}. The set of passive circles  for $C$ and $C^{*}$ agree.
A configuration is called \textbf{active}, when it has no passive circles. For any resolution configuration $C$, removing all the passive circles produces an active resolution configuration $C_0$. We have the following decomposition: 
\begin{align*}
V_0(C)&=V_0(C_0) \otimes P(C) &      V_1(C)=V_1(C_0) \otimes P(C)
\end{align*}
where $P(C)$ is the tensor product of ${\mathbb{F}}_2[w]/(w^2)$ of all passive circles $w$ of $C$.  \\ \\ 
Writing $S^{2}$ as $\mathbb{R}^2 \cup \{ \infty \}$, the \textbf{mirror image} of a resolution configuration $C$ is obtained by reflecting it along the line $\mathbb{R} \times \{ 0 \}$. 
%Both Bar-Natan's perturbation $h_1$ and Szabo's geometric perturbation $d_i, i \geq 2$ 
The resolution configurations described above are used to construct the perturbed differentials $(2), (3)$ and $(4)$ from the introduction on the Khovanov complex by summing over maps of the form: \[ F_{C}: V_0(C) \to V_1(C) \] 
In particular, the map $d_i$ is a sum over all $F_{C}$ for all $i $\hyp dimensional oriented resolution configurations $C$. 
\begin{align*}
d_{i}=\sum_{|C|=i} F_{C}    
\end{align*}
%where $(I,J)$ varies over all $i \hyp$dimensional faces in the Khovanov cube and $|J-I|$ denotes the Einstein's metric (\S 5, \cite{MR3431667}). 
%Bar-Natan's perturbation has only $2$ nonzero terms corresponding to $1\hyp$dimensional resolution configurations. Szabo's geometric perturbation $d_i, i \geq 2$ has nonzero terms corresponding to certain family of oriented resolution configurations ( p.$10$, \cite{MR3431667}). \\  \\ 
Often more sophisticated notation is used to express this sum, see p.10 \cite{MR3431667}. To be precise, in order to construct differentials such as $\delta_{Sz}$ or $\delta_{BN}$ combinatorially it suffices to define maps $F_{C}$ associated to resolution configurations. \\ \\
\noindent For a given collection of $\{ F_{C} \}$ to be reasonable, the following rules are used as guides.
Next, we list the set of rules relevant to Bar-Natan and Szabo's perturbation. \\
\begin{enumerate}

    \item \noindent \textsl{\large{Naturality Rule:}} Let $C$ and $C'$ be two $k$\hyp dimensional configurations, with the property that there is an orientation preserving diffeomorphism of the sphere that sends $C$ to $C'$ . Then the diffeomorphism induces canonical identifications $V_0(C)=V_0(C')$, and $V_1(C)= V_1(C')$. Under these identifications \[ F_C= F_{C'} \]

    \item \noindent \textsl{\large{Conjugation Rule:}} If $r(C)$ denotes the resolution configuration where all the orientations of the edges in $C$ are reversed then, \[ F_{C}= F_{r(C)} \]
   Given a circle $z$, we define the duality endomorphism on the $2$\hyp dimensional vector space $V(z)=\mathbb{F}_{2}/(z^2)$ by $1^{*}=z$ and $z^{*}=1$. This induces the duality map on $V_{0}(C)$ and $V_{1}(C)$. There is a canonical identification between $V_{0}(C)$ with $V_{1}(m(C^{*}))$ and $V_{1}(C)$ with $V_{0}(m(C^{*}))$ where $m(C^{*})$ denotes the mirror of the dual configuration.
    \item \noindent \textsl{\large{Duality Rule:}}
    Let $m(C^{*})$ be the mirror of the dual configuration. Then for all pairs of monomials $(a,b)$, where $a \in V_0(C)$ and $b \in V_1(C)$, the coefficient of $F_{C}(a)$ at $b$ is equal to the coefficient of $F_{m(C^{*})}(b^{*})$ at $a^{*}$. More precisely, any isomorphism $\varphi$ between $\mathbb{F}_{2}[x]/(x^2)$ and $\left( \mathbb{F}_{2}[x]/(x^2) \right )^{*}$ extends to an isomorphism $\tilde{\varphi}$ between $V_{i}(C)$ and $V_{i}(C)^{*}$ for $i=0,1$. If $\varphi$ is chosen to be 
    \begin{gather*}
\varphi:\mathbb{F}_{2}[x]/(x^2) \to \left(\mathbb{F}_{2}[x]/(x^2) \right)^{*} \notag \\
\varphi(x)=1^{*}  \,\, \text{and}\quad \varphi(1)=x^{*}
    \end{gather*}
   then the duality rule ensures the commutativity of the following diagram:
\catcode`\@=11
\newdimen\cdsep
\cdsep=3em

\def\cdstrut{\vrule height .6\cdsep width 0pt depth .4\cdsep}
\def\@cdstrut{{\advance\cdsep by 2em\cdstrut}}

\def\arrow#1#2{
  \ifx d#1
    \llap{$\scriptstyle#2$}\left\downarrow\cdstrut\right.\@cdstrut\fi
  \ifx u#1
    \llap{$\scriptstyle#2$}\left\uparrow\cdstrut\right.\@cdstrut\fi
  \ifx r#1
    \mathop{\hbox to \cdsep{\rightarrowfill}}\limits^{#2}\fi
  \ifx l#1
    \mathop{\hbox to \cdsep{\leftarrowfill}}\limits^{#2}\fi
}
\catcode`\@=12

\cdsep=3em

$$\begin{matrix}
  V_{0}(m(C^{*}))                    & \arrow{r}{F_{m(C^*)}}   & V_{1}(m(C^{*}))                    \cr
  \arrow{d}{\tilde{\varphi}} &                      & \arrow{d}{\tilde{\varphi}} \cr
  V_{1}(C)^{*}                  & \arrow{r}{F_{C}^{*}} & V_{0}(C)^{*}                  \cr

\end{matrix}$$
\label{dual}
    
    \item\noindent \textsl{\large{Filtration Rule:}}  Let $C$ be a configuration and $a \in V_0(C)$, $b \in V_1(C)$ be two monomials. For a point $P$ in the union of the starting circles, let $x(P)$ and $y(P)$ denote the starting and the ending circles that contain $P$. If $a$ is divisible by $x(P)$ and the coefficient of $F_{C}(a)$ at $b$ is non-zero, then $b$ is divisible by $y(P)$. 
    
    \item \noindent \textsl{\large{Disoriented Rule:}} If $C$ and $C'$ differ in the orientation of arcs then, \[ F_C=F_{C'} \] 
    
    \item \noindent \textsl{\large{Extension Rule:}} For a configuration $C$, the associated map $F_{C}$ depends only on the active part $C_0$ i.e. \[ F_C(a\cdot v)=F_{C_0}(a) \cdot v \]
where $v \in P(C)$ and $a \in V_0(C)$. \\

    \item \textsl{\large{Disconnected Rule:}} If $C$ is a disconnected configuration i.e. the active circles of $C_0$ can be partioned into two non\hyp empty sets, $c_1,...,c_s$ , $d_1,...,d_t$ so that none of the $\gamma$ arcs connect $c_i$ to $d_j$ then \[ F_{C} =0 \]

\end{enumerate}

\noindent Sometimes these rules are imposed depending upon how the construction deals with the orientation. The naturality rule asserts invariance under isotopy of $S^{2}$, while conjugation and duality rules are TQFT duality properties. The extension rule and the filtration rule correspond to partial functoriality. The filtration rule allows us to define a reduced theory. Below we make a small table for comparisons between different theories:

\begin{center}
\begin{tabular}{ |c||c| } 
 \hline
 Khovanov's $d_1$ & 1,2,3,4,5,6 see, Def 2.4 \cite{MR3692911} \\  
 \hline
 Bar-Natan's $h_1$ & 1,2,3,4,5,6  see, Def $2.5$, \cite{MR3692911} \\ 
 \hline
 Szabo's $d_i$ for $i \geq 2$ & 1,2,3,4,6,7 see, Def 2.6 \cite{MR3692911} \\ 
 \hline
\end{tabular}
\end{center}

%\noindent Bar-Natan's perturbation $h_1$ satisfies the naturality rule, the disorineted rule, the duality rule, the extension rule, and the filtration rule (Def $2.5$, \cite{MR3692911}).  Szabo's geometric perturbation $d_i, i \geq 2$ satisfies the naturality rule, conjugation(but not disoriented rule), the disconnected rule, the duality rule, the extension rule and the filtration rule (Def 2.6 \cite{MR3692911}).  \\ \\ 
\noindent This section concludes with the definition of the Sarkar-Seed-Szabo's extended differential $h_i,$ for $i \geq 2 $.
Before we define the extended terms $h_i$, we need to define the words \textbf{tree} and \textbf{dual tree}. \\ 

\noindent A connected resolution configuration of dimension $k$ is said to a \textbf{tree} if it has exactly $k+1$ starting circles and $1$ ending circle. A dual of a tree is called a \textbf{dual tree}. 
%Sarkar-Seed-Szabo's extended term $\{ h_k, k \geq 2 \}$ has bigrading $(k,2k)$ and it satisfies the naturality rule, the disoriented rule, the duality rule, the extension rule and the filtration rule (Lemma 3.3, \cite{MR3692911}).
$F_{C}$ is \textbf{non-zero} if and only if $C$ is a disjoint union of \textbf{trees} and \textbf{dual trees}.
If $C$ is a connected $k$\hyp dimensional resolution configuration which is a tree $\{ x_1,..,x_{k+1}, \gamma_1,...,\gamma_{k} \}$
then, define
\begin{equation}
\label{tree}
F_{C}(\prod_{i=1}^{k+1}x_i)=y     
\end{equation}

\noindent where $y$ is the ending circle. On any other monomial $F_{C}$ is defined to be zero. Dually, if C is a dual tree $\{y,\gamma_1^{*},...,\gamma_{k}^{*} \}$ then the only non-zero term of $F_{C}$ is given by
\begin{equation}
\label{dual-tree}
 F_{C}(1)=1\otimes ... \otimes 1 .    
\end{equation}

\noindent More generally, if $C$ is a disjoint union of \textbf{trees} and  \textbf{dual trees}: $C= \{C_{i} \}_{i=1}^{k} \bigsqcup \{ {C^{'}}_{j}^{*} \}_{j=1}^{l} $ then define:

\begin{equation}
F_{C}=( \bigotimes_{i=1}^{k} F_{C_{i}}) \bigotimes(\bigotimes_{j=1}^{l} F_{{C^{'}}_{j}^{*}}).
\end{equation}

. \\ \\
\begin{remark}
Recall that in the Khovanov complex the quantum grading is normalized to make $d_1$ into a differential in bidegree $(1,0)$. For a $k$\hyp dimensional resolution configuration, the normalized quantum grading convention increases the quantum grading of monomials in $V_{1}(C)$ by $k$ relative to the quantum grading of monomials in $V_{0}(C)$. Thus, 
Sarkar-Seed-Szabo's extended term $\{ h_k, k \geq 2 \}$ has bigrading $(k,2k)$ and it satisfies the naturality rule, the disoriented rule, the duality rule, the extension rule and the filtration rule (Lemma 3.3, \cite{MR3692911}).
\end{remark}

\section{Proof of the Theorem}
We prove the main theorem by the well\hyp ordering principle. In Lemma 3.1, we prove the theorem for all $2$\hyp dimensional connected resolution configurations. Next, we will choose the $k \geq 3 $ minimal with respect to the property that there exists a family of connected $k$\hyp dimensional resolution configurations other than trees and dual trees that defines a map in bidegree $(k,2k)$ satisfying the naturality rule, the filtration rule and the duality rule. This will allow us to construct a family of $k-1$\hyp dimensional connected resolution configurations other than trees and dual trees contradicting the minimality hypothesis on $k$. The disconnected case will then follow from the connected case.        
\begin{lemma}
If $C$ is a connected $2$\hyp dimensional unoriented configuration, then any non\hyp zero map \[ F_{C}: V_0 (C) \rightarrow V_1 (C) \] satisfying the naturality rule, the disoriented rule, the filtration rule and the duality rule in bidegree $(2,4)$ agrees with the assignment in $(5)$ and $(6)$. In more detail, it must be equal to $zero$ unless $C$ is a tree or a dual tree. In this case, $F_{C}(x_1x_2x_3)=y $ when $C$ is a tree and $F_{C}(1)=1\otimes 1 \otimes 1$ when $C$ is a dual tree where, $x_1,x_2,x_3$ are the starting circles and $y$ is the ending circle of the tree.

\begin{proof}

Szabo listed all the connected $2$\hyp dimensional configurations in (see p.$8$, \cite{MR3431667}). They are enumerated as follows:  \\

\begin{tikzpicture}

\draw [blue] (2,2) circle [radius=0.5];;
\draw[blue] (2.5,2.1875)--(3.5,2.1875);
\draw[blue] (2.5, 1.8125)--(3.5,1.8125);
\draw [blue] (4,2) circle [radius=0.5];;
\node[below] at (3,1) {1} ;

\draw[blue] (6,2.5) circle [radius=0.25];;
\draw[blue] (8,2.5) circle [radius=0.25];;
\draw[blue] (7,1.5) circle [radius=0.25];;
\draw[blue] (6,2.25)--( 6.75,1.5);
\draw[blue] (8,2.25)--(7.25,1.5);
\node[below] at (7,1) {2};

\draw[blue] (10,2.5) circle [radius=1] ;;
\draw[blue] (10,2.5) circle [radius=0.25];;
\draw[blue] (12,3.5) circle [radius=0.25];;
\draw[blue] (10.25,2.5)--(11,2.5);
\draw[blue] (11.75,3.5)--(10.75,3.161);
\node[below] at (10,1) {3} ;

\draw[blue] (14,2.5) circle [radius=1];; 
\draw[blue] (13.25,3.161)--(14.75,3.161);
\draw[blue] (13.25,1.8385)--(14.75,1.8385);
\node[below] at (14,1) {4}; 
\end{tikzpicture}

\begin{tikzpicture}

\draw[blue] (2,2) circle [radius=1] ;;
\draw[blue] (1.25,2.6614)--(2.75,2.6614);
\draw[blue] (1.25,1.3385) to [out=-135 , in=-180]  (2,0.5);
\draw[blue] (2,0.5) to [out=0, in=-45] (2.75,1.3385);
\node[below] at (2,0) {5};

\draw[blue] (6,1) circle [radius=1];; 
\draw[blue] (6,3) circle [radius=0.25];;
\draw[blue] (5,1)--(7,1); 
\draw[blue] (6,2)--(6,2.75);
\node[below] at (6,0) {6};

\draw[blue] (10,2) circle [radius=1];;
\draw[blue](10,2) circle [radius=0.25];;
\draw[blue] (10,3)--(10,2.25);
\draw[blue] (9.25,1.3385)--(10.75,1.3385);
\node[below] at (10,0) {7};

\draw[blue](14,2) circle [radius=1];; 
\draw[blue] (13,2)--(15,2);
\draw[blue]  (14,3) to [out=60, in=90] (15.5,2);
\draw[blue] (15.5,2) to [out=-90, in=-60] (14,1);
\node[below] at (14,0) {8} ; 

\node[below] at (8,-1) {Figure 2: Types of connected $2$\hyp dimensional configurations};

\end{tikzpicture}

\noindent We observe that configurations $ 1, 6, 7, 8$ are neither trees nor dual trees. We will show that $F_{C}$ must be equal to zero in these cases. Configurations $2, 3, 4, 5$ are tress or dual trees. Configurations $(1,2), (3,4)$, and $(5,6)$ are dual to each other while the configurations $7$ and $8$ are self duals.

%As configurations $ 2, 3 $ are trees, $F_{C}$ will be equal to $F_{C}(x_1x_2x_3)=y$ where $x_1,x_2,x_3$ are the starting circles and $y$ is the ending circle. As $5, 6$ are dual trees, $F_{C}$ will be equal to $h(1)=1 \otimes 1 \otimes 1$. \\ \\ 
\noindent We study each of these configurations separately.  \\
\begin{description}
\item[\underline{Configuration $1, 6$ and $7:$}]
Here, we have two starting circles $x_1,x_2$ and two ending circles $y_1,y_2$. Thus, \[ F_{C}: V(x_1) \otimes V(x_2)\rightarrow V(y_1) \otimes V(y_2) \] As $F_{C}$ has bidegree $(2,4)$, we have the following constraints: \\
\begin{eqnarray*}
F_{C}(x_1 \cdot x_2) & \in & \langle y_1, y_2 \rangle \\ 
F_{C}(x_i) & \in &  \langle 1 \rangle \\
F_{C}(1) & = & 0
\end{eqnarray*}
If $F_{C}$ satisfies the filtration rule, then $y_{1} \cdot y_2| F_{C}(x_1 \cdot x_2) $ and $y_j|F_{C}(x_i)$ where $x_i$ and $y_j$ contain a common point P. This implies $F_{C} \equiv 0 $ in this cases. 

\item[\underline{Configuration $8:$}]
Here, we have one starting circle $x$ and one ending circle $y$. Thus, \[ F_{C}: V(x) \rightarrow V(y) \] As $F_{C}$ has bidegree $(2,4)$, we have the following constraints on $F_{C}:$

\begin{eqnarray*}
F_{C}(x) & \in & \langle 1 \rangle \\
F_{C}(1) &= & 0
\end{eqnarray*}

\noindent But this is impossible as $F_{C}$ must satisfy the filtration rule, as before, unless $F_{C}$ is zero. 

\item[\underline{Configurations $2$ and $3$:}]
These configurations are trees. There are three starting circles $x_1,x_2,x_3$ and one ending circle $y$. Thus, 

\[ F_{C}: V(x_1) \otimes V(x_2) \otimes V(x_3)\rightarrow V(y)  \]
The bidegree $(2,4)$ constraint on $F_{C}$ imply the following:

\begin{eqnarray*}
    F_{C}(x_1 \cdot x_2 \cdot x_3) & \in & \langle y \rangle \\
    F_{C}(x_i \cdot x_j) & \in &  \langle 1 \rangle \\
    F_{C}(x_i) & = & 0 , \\
    F_{C}(1) & = & 0  
\end{eqnarray*}
We rule out the second constraint as $F_{C}$ satisfies the filtration rule. Thus, $F_{C}(x_1\cdot x_2\cdot x_3)=y$ is the only non-zero term.
\item[\underline{Configurations $4$ and $5$:}]

We are left with configurations $4$ and $5$.
We can either use duality rule or argue as follows:
configurations $4$ and $5$ are dual trees. We have one starting circle $x$ and three ending circles $y_1,y_2,y_3$. Thus, 

\[ F_{C}: V(x) \rightarrow V(y_1) \otimes V(y_2) \otimes V(y_3)  \] 
The bidegree constraint on $F_{C}$ implies the following: 
\begin{eqnarray*}
F_{C}(1) & \in & \langle 1 \rangle \\
F_{C}(x) & = & 0
\end{eqnarray*}

\noindent Hence, $F_{C}(1)=1$ is the only non-zero term in this case. 

\end{description}

\end{proof}
\end{lemma}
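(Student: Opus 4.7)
My plan is to follow exactly the case analysis dictated by Szabo's classification of connected 2-dimensional configurations (the eight types in Figure 2) and to show, configuration by configuration, that the bidegree $(2,4)$ constraint together with the filtration and duality rules forces $F_C$ either to vanish or to coincide with the tree/dual-tree formulas in (5) and (6). The naturality rule is what allows us to fix one representative per diffeomorphism class in the first place, so that the enumeration into eight types is exhaustive.

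First I would write down, for each configuration, the vector spaces $V_0(C)$ and $V_1(C)$ together with a monomial basis, and then impose the $(2,4)$-bidegree constraint. Because $\deg_q(x)=-1$ and $\deg_q(1)=1$, and because the normalized convention shifts the quantum grading on $V_1(C)$ by the homological degree $k=2$, the bidegree constraint severely limits which monomial can be sent to which. For configurations $1$, $6$, $7$ (two starting, two ending circles) the only potentially non-zero matrix entries are $F_C(x_1x_2)\in\langle y_1,y_2\rangle$ and $F_C(x_i)\in\langle 1\rangle$; for configuration $8$ (one starting, one ending circle) it is $F_C(x)\in\langle 1\rangle$; for configurations $2,3$ (three starting, one ending) it is $F_C(x_1x_2x_3)\in\langle y\rangle$ and $F_C(x_ix_j)\in\langle 1\rangle$; and for the dual trees $4,5$ (one starting, three ending) it is $F_C(1)\in\langle 1\rangle$. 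These lists are pure bookkeeping, but they are essential.

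Next, I would apply the filtration rule pointwise. For configurations $1$, $6$, $7$, $8$, a direct geometric inspection of the diagrams shows that every starting circle $x_i$ meets both ending circles (or the unique ending circle in case $8$) at some point $P$, so the filtration rule forces $y_1y_2$ (or $y$) to divide each non-zero image. Combined with the bidegree constraints this kills every candidate matrix entry and gives $F_C\equiv 0$. For configurations $2$, $3$ the filtration rule kills the potentially spurious term $F_C(x_ix_j)$ because the missing starting circle $x_\ell$ still touches the single ending circle $y$; this leaves exactly the tree formula $F_C(x_1x_2x_3)=y$, and a non-zero $F_C$ forces this coefficient to be $1$ (the only non-zero element of $\mathbb{F}_2$). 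For the dual trees $4$, $5$, the only surviving candidate is $F_C(1)\in\langle 1\rangle$, and non-triviality forces $F_C(1)=1\otimes 1\otimes 1$, matching (6). Alternatively, and as a cross-check, the duality rule identifies $F_C$ for configurations $4$ and $5$ with the transpose of $F_{m(C^*)}$ for configurations $2$ and $3$, so the dual-tree case can be deduced directly from the tree case via the commutative square of the duality rule.

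The only step that requires any real attention is the filtration-rule argument for configurations $1$, $6$, $7$, $8$: one has to verify, by looking at the picture, that every pair of starting and ending circles shares at least one point $P$, so that filtration forbids every degree-allowed component. This is the main obstacle, but it is a finite planar check and does not use anything beyond the definitions. Once that is in place, the lemma follows by assembling the eight cases.
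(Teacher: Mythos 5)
Your approach matches the paper's proof closely: both enumerate Szabo's eight connected $2$\hyp dimensional configurations, write down the $(2,4)$\hyp bidegree\hyp allowed matrix entries for each, and then use the filtration rule to eliminate the spurious ones, with the duality rule available as a shortcut for the dual trees (configurations $4$ and $5$). One small slip in your write\hyp up: for configurations $2$ and $3$ you say the term $F_{C}(x_ix_j)$ is killed ``because the missing starting circle $x_\ell$ still touches the single ending circle $y$.'' That is not how the filtration rule operates---it is triggered by circles that divide the input monomial, not by circles that are absent from it. The correct reason is that $x_i \mid x_ix_j$, and $x_i$ shares a point $P$ with the ending circle $y$, so the filtration rule forces $y \mid F_{C}(x_ix_j)$, which is incompatible with the bidegree constraint $F_{C}(x_ix_j)\in\langle 1\rangle$ unless that entry is zero. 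The conclusion is the same, but the mechanism you invoked is reversed.
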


%%%End of Lemma 1%%%%
\noindent In the next lemma, we will show that any associated map $F_{C}$ for a resolution configuration $C$ satisfying the filtration rule and the naturality rule can be extended to an associated map satisfying the filtration rule, the duality rule and the naturality rule.  \\ \\ 

\noindent Let $C$ be a self\hyp dual unoriented resolution configuration i.e. $C=m(C^{*})$. Let $\psi$ denotes a diffeomorphism of $S^{2}$ that identifies $C$ with $m(C^{*})$.  Suppose, $F_{C}: V_{0}(C) \to V_1(C)$ is an associated map satisfying the naturality rule. The commutative diagram in the duality rule can be used to define a map $F_{m(C^{*})}$ on the mirror of the dual resolution configuration $m(C^{*})$. As $\psi$ identifies $V_{0}(C)$ with $V_{0}(m(C^{*}))$ and $V_{1}(C)$ with $V_{1}(m(C^{*}))$, $F_{m(C^{*})}$ defines a map $F'_{C} : V_{0}(C) \to V_1(C)$ via the identification $\psi$ i.e. we have the following commutative diagram: 
\begin{center}
\begin{tikzpicture}
 \node at (0,0) {$V_{0}(C)$}; 
 \draw[-stealth] (0.75,0)--(3,0); 
 \node at (3.75,0) {$V_{1}(C)$};
 \draw[-stealth] (0,-0.5) -- (0,-2.75);
 \node at (0.25,-1.75) {$\psi$};
  \node at (0.25+3.75,-1.75) {$\psi$};
   \node at (0.25+3.75,-1.75-3) {$\tilde{\varphi}$};
   \node at (0.25,-1.75-3) {$\tilde{\varphi}$};
  \draw[-stealth] (3.75,-0.5) -- (3.75,-2.75);
  \node at (-1.25,-3) {$V_1(C)=V_{0}(m(C^{*}))$};
   \draw[-stealth] (0.75,-3)--(3,-3); 
    \node at (5,-3) {$V_{1}(m(C^{*}))=V_{0}(C)$}; 
 \draw[-stealth] (0,-0.5-3) -- (0,-2.75-3); 
   \draw[-stealth] (3.75,-0.5-3) -- (3.75,-2.75-3);  
   \node at (0,-6) {$V_{1}(C)^{*}$}; 
  \node at (3.75,-6) {$V_{0}(C)^{*}$}; 
  \draw[-stealth] (0.75,-6)--(3,-6);
  \node at (1.875,-5.75) {$F_{C}^{*}$};
   \node at (1.875,-2.75) {$F_{m(C^{*})}$};
   \node at (1.875,-2.75+3) {$F'_{C}$};   
\end{tikzpicture}
\end{center}

We define, 
\begin{equation*}
    \tilde{F}_{C}= 
    \begin{cases*}
     F_{C}+F'_{C} & if $F_{C} \neq F'_{C}$ \\
     F_{C}        & otherwise 
    \end{cases*}
\end{equation*}

\begin{lemma}
Let $C$ be an unoriented resolution configuration and let $\{x_i\}$ and $\{y_j \}$ denote the starting and the ending circles of $C$ respectively. Let, \[F_{C}: V_{0}(C) \to \bigotimes V_{1}(C) \] is an associated map which satisfies the filtration rule and the naturality rule. Define: \[ F_{m(C^*)}: V_{0}(m(C^{*})) \to \bigotimes V_{1}(m(C^{*})) \] on the mirror image of dual configuration $m(C^{*})$ as given by the duality rule. If $C$ is self\hyp dual i.e. $C=m(C^{*})$ then, replace $F_{C}$ by $\tilde{F}_{C}$. $F_{C}$ when $C \neq m(C^{*})$ or $\tilde{F}_{C}$ when $C=m(C^{*})$  satisfy the filtration rule, the duality rule and the naturality rule. 

\begin{proof}
$\tilde{F}_{C}$ or $F_{C}$ satisfy the duality rule by construction. To complete the proof we need to show that $\tilde{F}_{C}$ or $F_{m(C^{*})}$ satisfy the filtration rule. 
For a point $P$, let $x(P)$ and $y(P)$ denote the starting circle and ending circle of $C$ containing the point $P$ respectively. Let $b \in \bigotimes V(y_j)$ and $a \in \bigotimes V(x_i)$ are two monomials such that the coefficient of $F_{m(C^{*})}(b)$ at $a$ is nonzero. This implies that the coefficient of $F_{C}(a^{*})$ at $b^{*}$ must be nonzero. If we assume $y(P) \mid b $, then we want to show that $x(P)\mid a$. Equivalently, it would be sufficient to show that $x(P) \nmid a^{*} $.  \\ 
Thus, for the sake of contradiction, we may assume $x(P) \mid a^{*}$. By the filtration rule on $F_{C}$, this implies $y(P)$ divides all the monomials appearing on $F_{C}(a^{*})$. In particular $y(P) \mid b^{*}$, which is a contradiction. 

For $\tilde{F_{C}}$ we notice that $F'_{C}=\psi^{-1} \circ  F_{m(C^{*})}    \circ \psi$ also satisfies the filtration rule as $\psi$ is defined using a diffeomorphism of $S^{2}$. Hence, $\tilde{F}_{C}$ satisfies the filtration rule because it is equal to the sum of two maps satisfying the filtration rule.  
\end{proof}

\end{lemma}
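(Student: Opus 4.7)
The plan is to establish each of the three rules in turn, treating the non-self-dual and self-dual cases in parallel. Duality is automatic: in the non-self-dual case $F_{m(C^*)}$ is defined precisely so that the duality square commutes, and in the self-dual case the symmetrization $\tilde F_C = F_C + F'_C$ is visibly invariant under the involution that swaps $F_C$ and $F'_C$, which is exactly what the duality rule demands when $C = m(C^*)$. Naturality is similarly cheap: $F_{m(C^*)}$ (resp.\ $F'_C$) is built from $F_C$ using only the canonical monomial $*$ operation and the diffeomorphism $\psi$, so any further orientation-preserving diffeomorphism of $S^2$ commutes through the construction and the assumed naturality of $F_C$ transports to $F_{m(C^*)}$.

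The real content is the filtration rule on $F_{m(C^*)}$, which I would verify by a contrapositive argument across the duality square. Working on $m(C^*)$, the starting (resp.\ ending) circles are the ending (resp.\ starting) circles of $C$, so for a point $P$ the relevant incidences $u(P), v(P)$ of $m(C^*)$ are identified with $y(P), x(P)$ of $C$. Suppose $y(P) \mid a'$ for some $a' \in V_0(m(C^*))$ and the coefficient of $F_{m(C^*)}(a')$ at $b'$ is nonzero; we want $x(P) \mid b'$. Assume instead $x(P) \nmid b'$. Because the $*$ operation flips the factors $1$ and $x$ circle by circle, $y(P) \mid a'$ translates to $y(P) \nmid a'^*$ in $V_1(C)$, while $x(P) \nmid b'$ translates to $x(P) \mid b'^*$ in $V_0(C)$. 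The duality rule produces a nonzero coefficient of $F_C(b'^*)$ at $a'^*$, so the filtration rule on $F_C$ forces $y(P) \mid a'^*$, contradicting $y(P) \nmid a'^*$.

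For the self-dual case the map $F'_C = \psi^{-1} \circ F_{m(C^*)} \circ \psi$ inherits the filtration rule from $F_{m(C^*)}$ because $\psi$ is a diffeomorphism of $S^2$ and therefore respects the starting/ending designation of circles and the point-circle incidences that define $x(P), y(P)$. The filtration rule is preserved under termwise addition, so $\tilde F_C = F_C + F'_C$ satisfies it. The main obstacle I anticipate is the careful bookkeeping of the $*$ operation when translating filtration conditions across the duality diagram: divisibility by $z$ encodes a nontrivial $x$-exponent at a given circle while $*$ flips exponents, so I need to be vigilant that the implication signs invert correctly under the translation before the contrapositive can be drawn.
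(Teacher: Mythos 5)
Your proposal matches the paper's proof in essence: the same contrapositive argument across the duality square (translate divisibility through the $*$ operation, invoke the duality rule to carry the nonzero coefficient over to $F_C$, apply the filtration rule on $F_C$ to reach a contradiction), and the same treatment of the self-dual case as a sum of two maps each satisfying the filtration rule. You additionally spell out naturality and the self-dual duality check, which the paper leaves implicit, but the substantive argument is identical.
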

%%% End of Lemma 2 %%%

\begin{lemma}
If $C$ is a $n$\hyp dimensional tree or dual tree and $F_{C}$ is any non\hyp zero associated map of bidegree $(n,p)$ satisfying the naturality rule, the filtration rule and the duality rule.\\
$\bullet$If $p=2n$ then, it must agree with Sarkar-Seed-Szabo formula in (\ref{tree}) and (\ref{dual-tree}).\\
$\bullet$If $p>2n$ then, $F_{C}$ must be zero.  
\end{lemma}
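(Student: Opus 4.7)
The plan is to convert the bidegree $(n,p)$ constraint into a precise quantum grading equation on pairs of monomials, then apply the filtration rule to kill every candidate except the one predicted by formulas (\ref{tree}) or (\ref{dual-tree}). Concretely, if $a\in V_0(C)$ and $b\in V_1(C)$ are monomials and the coefficient of $F_C(a)$ at $b$ is nonzero, the normalization described in the remark forces the un-normalized quantum shift $\mathrm{gr}_q(b)-\mathrm{gr}_q(a)$ to equal $p-n$. Working over $\mathbb{F}_2$, once a single candidate pair survives the grading and filtration constraints, the nonzero hypothesis pins the remaining coefficient to $1$.

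For a tree $C$ of dimension $n$ with starting circles $x_1,\dots,x_{n+1}$ and ending circle $y$, a typical monomial $a=\prod_{i\in T}x_i$ has grading $(n+1)-2|T|$ while $b\in V_1(C)$ has grading $\pm 1$. The parity of the shift forces $p$ even, and the extremal inequality $\mathrm{gr}_q(b)-\mathrm{gr}_q(a)\le n+2$ forces $p\le 2n+2$. At $p=2n$ the grading equation admits exactly two families of candidates: $(T=\{1,\dots,n+1\},\, b=y)$ and $(|T|=n,\, b=1)$. In the second family some $x_i$ divides $a$, so the filtration rule forces $y$ to divide $b=1$, which is impossible; only $F_C(x_1\cdots x_{n+1})=c\,y$ remains, and $c=1$ by the nonzero hypothesis, recovering (\ref{tree}). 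At $p=2n+2$ the only grading candidate is $(T=\{1,\dots,n+1\},\, b=1)$, again killed by filtration, so $F_C=0$; for $p\ge 2n+4$ no candidates exist at all.

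For a dual tree $C$ of dimension $n$ with starting circle $x$ and ending circles $y_1,\dots,y_{n+1}$, the analysis is symmetric. At $p=2n$ the grading equation allows $(a=1,\,b=1\otimes\cdots\otimes 1)$ and $(a=x,\,b=y_j)$ for some $j$. The key geometric input is that the $n$ dual arcs cut $x$ into arc-segments distributed across all $n+1$ ending circles, so every $y_j$ arises as $y(P)$ for some point $P$ on $x$. Applying the filtration rule to $a=x$ then forces every $y_j$ to divide $b$, contradicting $b=y_j$. Only $F_C(1)=c\,(1\otimes\cdots\otimes 1)$ survives; the nonzero hypothesis gives $c=1$, recovering (\ref{dual-tree}). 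The $p>2n$ argument is parallel, the unique grading candidate at $p=2n+2$ being again killed by the same filtration argument.

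The main subtlety lies in the geometric input used in the dual-tree filtration step: that the starting circle $x$ meets every ending circle of $C$. This follows from the defining property of a dual tree---dualizing a tree of dimension $n$ produces a configuration whose $1$-resolution has $n+1$ circles, each necessarily built from segments of $x$ together with dual arcs, since the dual arcs are not closed loops. Alternatively one could reduce the dual-tree case to the tree case via the duality rule and Lemma 3.2, since the mirror dual of a dual tree is a tree; however, the direct argument above keeps each case self-contained and parallels the casework of Lemma 3.1.
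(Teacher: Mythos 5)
Your proof is correct, and the tree case is essentially the same argument the paper gives: use the bidegree to limit the candidate pairs $(a,b)$ to at most two families, then kill the unwanted family with the filtration rule, and observe that over $\mathbb{F}_2$ the surviving coefficient is pinned to $1$; for $p>2n$ the extremal grading count plus one more filtration step finishes it.

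Where you genuinely diverge from the paper is the dual-tree case. The paper dispenses with it in one line: ``Because of the duality rule, it would be sufficient to focus on tree,'' i.e.\ it determines $F_{m(C^*)}$ (which is a tree) and reads off $F_C$ from the commutative square in the duality rule. You instead run the grading-plus-filtration argument directly on the dual tree. This requires the extra geometric input you correctly flag: that for a connected dual tree every ending circle $y_j$ contains a point of the starting circle $x$, so the filtration rule applied to $a=x$ forces $\emph{all}$ $y_j$ to divide $b$. Your justification of that fact is right --- surgery on a single circle along embedded arcs always leaves each resulting component containing a segment of the original circle, since the surgery bands are rectangles rather than closed loops. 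The trade-off is that the paper's duality reduction is shorter and needs no new geometric lemma, while your direct argument is self-contained, makes the filtration obstruction symmetric between the two cases, and avoids tracking how the bidegree $(n,p)$ transports across the duality square. Both are valid; yours is slightly more work but arguably clearer about where each rule is actually being used.
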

\begin{proof}
Because of the duality rule, it would be sufficient to focus on tree. Let $\{ x_1,x_2,...,x_{n+1} \}$ and $\{ y \}$ denote the set of starting circles and ending circle respectively. When $p=2n$, the degree restriction on $F_{C}$ implies that the possible non\hyp zero terms must satisfy: 
\begin{eqnarray*}
F_{C}(\prod_{i=1}^{n+1} x_{i}) & \in & \langle y \rangle \\ 
F_{C} (x_1\cdot \cdot \cdot \hat{x_i}\cdot \cdot \cdot x_{n+1}) & \in & \langle 1 \rangle 
\end{eqnarray*}
\noindent If $F_{C}$ satisfies the filtration rule, then for any monomial $m$ with $x_i|m$ will imply $y|F_{C}(m)$. In particular, $F_{C} (x_1\cdot \cdot \cdot \hat{x_i}\cdot \cdot \cdot x_{n+1})$ must be divisible by $y$. Hence, $(\ref{tree})$ and $(\ref{dual-tree})$ are the only non\hyp zero terms. \\ 

\noindent In the other case, when $p>2n$ we compute the difference between the maximum quantum degree in $V(y)$ and the minimum quantum degree in $\bigotimes_{i=1}^{n+1} V(x_i)$ which is,  
\begin{align*}
    gr_{q}(1)-gr_{q}(\prod_{i=1}^{n+1} x_{i}) &= gr_{q}(1)+n-(n+1)gr_{q}(x); \, \, \textit{see Remark 1.}\\
     &= 1+n-(-n-1) \\ 
     &=2n+2
\end{align*}
Thus, the only non\hyp zero map $F_{C}$ of bidegree $(n,p)$ with $p>2n$ must be equal to $F_{C}(\prod_{i=1}^{n+1} x_{i})=1$ which is a contradiction to the filtration rule. 
\end{proof}

\begin{theorem}
If $C$ is a connected $n$\hyp dimensional unoriented configuration which is not a tree or a dual tree, then any $F_{C}: V_{0}(C) \rightarrow V_1(C)$ satisfying the filtration rule, the duality rule and the naturality rule of bidegree $(n,p)$ with $p \geq 2n $ must be equal to $zero$. 
%%%%Here \[ V_{0}(D)= \bigotimes_{i=1}^{t} V(x_i), V_1(D)= \bigotimes_{j=1}^{s} V(y_j) \] $ \{ x_{i}|1 \leq i \leq t \} , \{y_j| 1 \leq j \leq s \}$ denote the starting circles and ending circles respectively. Bidegrees in $V_{0}(D), V_1(D)$ are induced from the bi-degrees in Khovanov Complex.   
\end{theorem}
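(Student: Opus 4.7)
The plan is to argue by well-ordering on the dimension $n$. Lemma 3.1 handles the base case $n = 2$, so I would suppose for contradiction that the theorem fails and choose $n \geq 3$ minimal such that there exists a connected $n$-dimensional configuration $C$, neither a tree nor a dual tree, carrying a non-zero $F_C$ of bidegree $(n, p)$ with $p \geq 2n$ satisfying the naturality, filtration, and duality rules. The key inductive consequence is that, by Lemma 3.3 together with the minimality of $n$, every connected $(n-1)$-dimensional configuration -- tree, dual tree, or neither -- admits only the zero associated map in bidegree $(n-1, q)$ for $q \geq 2n$, since $2n > 2(n-1)$.

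First I would extract two structural facts. Because $C$ is connected and is not a tree, the graph on starting circles with edges given by the $n$ arcs is connected but cannot be a spanning tree (a spanning tree on $n+1$ vertices has $n$ edges and forces $e = 1$, i.e., $C$ is a tree). Thus $s \leq n$, and dually $e \leq n$. Next I would fix a witness of non-vanishing: $a_0 = \prod_{i \in I} x_i \in V_0(C)$ with $F_C(a_0) \neq 0$ and $b_0 = \prod_{j \in J} y_j$ in its support, so that the bidegree imposes
\begin{equation*}
|I| - |J| = \tfrac{1}{2}(p - n + s - e),
\end{equation*}
while filtration (applied both to $F_C$ and, via Lemma 3.2, to the dual map $F_{m(C^*)}$) forces $J \supseteq N(I)$ and $I \cap N(J^c) = \emptyset$, where $N$ denotes the neighborhood relation in the bipartite incidence graph of starting and ending circles.

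The main step is to produce from $(C, F_C, a_0, b_0)$ a connected $(n-1)$-dimensional configuration $C'$, not a tree or dual tree, carrying a non-zero associated map $F_{C'}$ of bidegree $(n-1, p)$ satisfying the three rules; any such $C'$ contradicts minimality. I would pick an arc $\gamma$ adapted to $(I, J)$ and form $C'$ either as the pre-resolution (remove $\gamma$, keep starting circles, surger the remaining arcs) or as the post-resolution (surger $\gamma$, keep ending circles), and induce $F_{C'}$ from $F_C$ using the Khovanov surgery map $\mu_\gamma$ at $\gamma$ -- a merge or a comultiplication on the relevant tensor factor. Naturality and duality for $F_{C'}$ would follow by composing with the naturality and duality data already present on $C$, and filtration for $F_{C'}$ would follow because the touching relation between starting and ending circles is preserved by the single-arc surgery.

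The hard part will be choosing $\gamma$ carefully so that $C'$ remains connected, avoids being a tree or dual tree, and $F_{C'}$ stays non-zero. A naive choice can disconnect $C'$, collapse it into a tree or dual tree, or annihilate the relevant component of $F_C$ through the kernel of $\mu_\gamma$. I expect the subtlest case to be when $C$ sits on the boundary $s = n$ or $e = n$, where one may need to pass to $m(C^*)$ by duality, aggregate contributions from several arcs, or push the grading--filtration analysis of the previous paragraph far enough to exclude a non-zero $F_C$ directly. Once a valid $C'$ and $F_{C'}$ are in hand, minimality forces $F_{C'} = 0$, contradicting $F_{C'}(\mu_\gamma(a_0)) \ni b_0$. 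Finally, the disconnected case reduces to the connected case via the tensor decomposition of $V_0(C)$ and $V_1(C)$ across connected components.
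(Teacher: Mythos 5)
Your high-level plan matches the paper's: induct on dimension via well-ordering, handle $n=2$ by Lemma 3.1, and try to manufacture a lower-dimensional counterexample by a single-arc surgery. But the proposal is an outline that stalls exactly at the step you yourself flag as hard, and the paper resolves that step by a specific structure your plan lacks.

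The paper's Case 3 first disposes of the two extreme monomials by pure grading arithmetic: $F_{C}(\prod_i x_i)=0$ because $t-s+n=p$ with $2\leq t,s\leq n$ forces $|t-s|<n\leq p-n$, and $F_{C}(1)=0$ because $s=p-n+2m+t>n$ with $m>1$. Only \emph{after} that does it surger, and only at an \emph{intermediate} monomial $x\neq 1,\prod x_i$, where connectedness always supplies an arc joining some $x_k\mid x$ to some $x_l\nmid x$. The map $F_1:V_0(C')\to V_1(C')=V_1(C)$ is not induced by a Khovanov surgery map $\mu_\gamma$; it is defined directly by $F_1((x_k)^{-1}x):=F_C(x)$ and zero elsewhere, then symmetrized by Lemma 3.2. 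This sidesteps the kernel issues you anticipate with $\mu_\gamma$. Also note the bidegree: removing $x_k$ and shifting one step in the cube raises the quantum grading by $2$, so $F_1$ has bidegree $(n-1,p-2)$, not $(n-1,p)$ as you wrote; since $p\geq 2n$ gives $p-2\geq 2(n-1)$, this is exactly the threshold needed to invoke minimality and Lemma 3.3.

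Finally, the case you worry about --- $C'$ collapsing to a tree or dual tree --- is not something the paper avoids; it is where the contradiction lives. If $C'$ is not a dual tree, minimality kills $F_1$ outright. If $C'$ \emph{is} a dual tree, Lemma 3.3 forces $(x_k)^{-1}x=1$, hence $x=x_k$ and $F_C(x_k)=1$, which violates the filtration rule since $x_k$ touches some ending circle. Your proposal does not see this dichotomy, and the alternatives you list for the boundary cases $s=n$, $e=n$ (``pass to $m(C^*)$, aggregate several arcs, push the grading analysis'') are not worked out and would not obviously close the gap; the degree bounds $t-s+n=p$ and $s=p-n+2m+t$ already handle those boundary values once you restrict attention to the top and bottom monomials. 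In short: right skeleton, but the decisive moves --- the monomial trichotomy, the explicit definition of $F_1$, the $(n-1,p-2)$ bookkeeping, and turning the dual-tree case into a contradiction rather than an obstruction --- are all missing.
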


\begin{proof}
We divide the proof into several cases. When $n=2$ this case follows from lemma $3.1$. Next, we choose $n \geq 3$ that is minimum and contradicts the theorem. \\ 
\begin{description}

\item{\textbf{Case 1.}}
Suppose $C$ contains a single starting circle $x$. As $C$ is not a dual tree, the number of ending circles must be strictly less than $n+1$ . Figure $3$ illustrates this when $n=4$. Let $s$ denotes the number of ending circles with $s < n+1 $. We have the associated map
\[ F_{C}: V(x) \rightarrow \bigotimes_{j=1}^{s} V(y_j) \] 
If $F_{C}(x) \neq 0 $, then $F_{C}(x)= \prod_{j=1}^{s} y_j$ as $F_{C}$ satisfies the filtration rule. But then

\begin{eqnarray*}
gr_{q}(\prod_{j=1}^{s} y_j ) - gr_{q}(x) &=& \left( s \cdot gr_{q}(x) +n \right) - \left( 1 \cdot gr_{q}(x) \right); \, \; \textit{see Remark 1.} \\
&=& \left( -s+n \right) -(-1) \\
 &=& -s+n+1
\end{eqnarray*}
 \noindent We get a contradiction as $ \left (-s+n+1 \right)$ is strictly less than $p \geq 2n $ which is the quantum degree of $F_{C}$. Thus, we may assume $F_{C}(x)=0$ and $F_{C}(1) \neq 0 $. Let $ F_{C}(1)= \sum  y_{j_1} y_{j_2}... y_{j_m} ,0 \leq m \leq s $. We again look at the difference: 
\begin{eqnarray*}
gr_{q}(y_{j_1} y_{j_2}... y_{j_m}) - gr_{q}(1)  &=&
\left( m \cdot gr_q(y_{j_{k}})+(s-m)\cdot gr_{q}(1) +n  \right) - (1 \cdot gr_{q}(1)) \\
&=& \left (-m+(s-m)+n \right) - (+1) \\
&=& s+n-2m-1
\end{eqnarray*}
If $gr_{q}(F_{C})=p$, then we must have 
 \begin{eqnarray*}
 s+n-2m-1 &=&  p \\ 
 s &=& p+2m+1 
\end{eqnarray*} 
 It leads to a contradiction as we've assumed that $s<n+1$ above.  \\ \\  

\begin{tikzpicture}
\draw[blue] (2,2) circle [radius=1] ;;
\draw[blue] (2.9949, 2.25)--(1.005,2.25); 
\draw[blue](2.9949,1.75)--(1.005,1.75);
\draw[blue] (2.866,2.5) to [out=45, in=90] (4,2); 
\draw[blue] (4,2) to [out=-90, in=-45] (2.866,1.5);
\draw[blue] (1.13397,2.5) to [out=135, in=90] (0,2);
\draw[blue] (0,2) to [out=-90,in=-135] (1.13397,1.5);
\node[below] at (2,1) {Starting circle};

\draw[blue] (10,2) circle [radius=1.5];;
\draw[blue] (10,2) circle [radius=0.75];;
\draw[blue] (10,2) circle [radius=0.25];;
\node[below] at (10,0.25) {ending circles};

\node[below] at (6,-1) {Figure $3$: The number of ending circles is strictly less than $5$};
\end{tikzpicture}

\item{\textbf{Case 2.}} Suppose $C$ contains a single ending circle. By case $1$, we know 
the associated map for the dual configuration $F_{m(C^{*})} \equiv 0$. Since $F_{C}$ satisfies the duality rule, we have $F_{C} \equiv 0 $.

\item{\textbf{Case 3.}}
 Suppose $C$ contains more than one starting and ending circle. Let $t,s$ denote the number of starting and ending circles with $ 2 \leq t,s \leq n $. Any map $F_{C}$ must be of the form 
 \[ F_{C}: V_{0}(C)= \bigotimes_{i=1}^{t} V(x_i)\rightarrow V_1(C)= \bigotimes_{j=1}^{s} V(y_j) \] \\
 As before, we first argue for  $x= \prod_{i=1}^{t} x_i$. As $F_{C}$ satisfies the filtration rule, $F_{C}(x) \neq 0$ implies, $y_j| F_{C}(x), \forall j$. Thus,  \[F_{C}(x)= F_{C}(\prod_{i=1}^{t}x_{i})= \prod_{j=1}^{s} y_j  \] Then,
 \begin{eqnarray*}
 gr_{q}(\prod_{j=1}^{s} y_j) - gr_{q}(\prod_{i=1}^{t}x_{i}) & = & \left( s \cdot gr_{q}(y_{j})+n \right) -(t \cdot gr_{q}(x_i)); \textit{see Remark 1.} \\ 
 &=& \left( s \cdot(-1)+n  \right) - (t \cdot (-1))\\
  & = & t-s+n 
 \end{eqnarray*}
It leads to a contradiction as 
 \begin{eqnarray*}
t-s+n & = & p \\ 
t-s &= & p-n  
 \end{eqnarray*}

\noindent But $2 \leq t,s \leq n$ implies $|t-s|<n$ which is a contradiction as $p-n \leq n$.  Thus, we conclude $F_{C}(\prod_{i=1}^{t}x_{i})=0$.\\ \\
Next, we claim $h(1)=0$. By duality rule, we get rid of the possibility $F_{C}(1)=1$. So, assume $F_{C}(1)= \sum y_{j_1} y_{j_2}\cdot \cdot \cdot y_{j_m}, 1<m\leq s $. Then,
\begin{eqnarray*}
gr_{q}(y_{j_1} y_{j_2}\cdot \cdot \cdot y_{j_m}) -gr_{q}(1) &=& \left( m \cdot gr_{q}(y_{j_{k}})+(s-m)\cdot gr_{q}(1) +n \right)- \left(t \cdot gr_{q}(1) \right); \\
&& \textit{see Remark 1.} \\
&=& \left(m\cdot (-1)+(s-m) \cdot(+1)+n \right )- t \cdot (+1) \\
&=& s+n-2m-t
\end{eqnarray*}
If $gr_{q}(F_{C})=p$, then 
\begin{eqnarray*}
s+n-2m-t & =&  p \\
s & = & p-n+2m+t    
\end{eqnarray*}
But $2\leq t,s \leq n$ implies $s \leq n$ and since $1 < m \leq s$, the equation above implies $s>n$ which is a contradiction. 
\noindent  Hence, we have \[ F_{C}(1) =0 \, \,\textit{and} \, \, F_{C}(\prod_{i=1}^{t} x_i)=0.\]  
 Finally, we claim $F_{C}(x)=0$ for any monomial $x \neq \prod_{i=1}^{t}x_i ,1 $. As the configuration is assumed to be connected, there exists two circles in the starting circles $x_k,x_l$ that are connected by an arc where $x_k \mid x $ and $x_l \nmid x$.
 \begin{center}
\begin{tikzpicture}

\draw [blue] (2,2) circle [radius=0.5];;
\draw[blue] (2.5,2)--(3.5,2);
\draw [blue] (4,2) circle [radius=0.5];;
\node[below] at (2,1.5) {$x_k$} ; 
\node[below] at (4, 1.5) {$x_l$} ; 
\end{tikzpicture}
\end{center}
\noindent Suppose $C'$ denote the connected $(n-1)$\hyp dimensional configuration that is constructed by performing surgery along the arc that connects $x_k$ and $x_{l}$. We notice that both $C$ and $C'$ have the same ending circles $ \{ y_j \}$. The monomial $(x_k)^{-1} x $  which consists of all the circles of $x$ except $x_k$ can naturally be thought of as an element of $V_0(C')$. Define the maps $F_0, F_1$ as follows: 
\begin{equation}
\begin{array}{cc}
      V_{0} (C) \xrightarrow{F_0}V_0(C') \xrightarrow{F_1}   V_1(C')= V_1(C)  \\ \\ 
      F_{0}(x) \coloneqq (x_{k})^{-1}x \\ \\
      F_1( (x_k)^{-1} x)\coloneqq F_{C}(x)
\end{array}
  \end{equation}

\noindent We define $F_0=0$ and $F_1 =0$ for other monomials in $V_{0}(C)$ and $V_{0}(C')$ respectively. First, we notice that \[gr_{q}((x_k)^{-1} x)- gr_{q}(x)= 2.\] Thus, the bidegree of $F_0$ is equal to $(1,2)$. As $F_{C}(x) = F_1\circ F_0 (x) $ by construction, we conclude that the bidegree of $F_1$ is equal to $(n-1, p-2)$. Let $x(P)| (x_{k})^{-1}\cdot x$ is the starting circle and $y(P)$ is the ending circle containing the point $P$ of $C'$. Now, $x(P)$ can naturally be identified with a starting circle $x_1(P)$ of $C$. Observe that $x_1(P) \neq x_k,x_l$ as none of the circles $x_k$ and $x_l$ appear in the expression $x_{k}^{-1}\cdot x$. Because $F_{C}$ satisfies the filtration rule and both $C,C'$ have the same ending circles, we conclude that $y(P)| F_{C}(x)=F_{C'}((x_{k})^{-1}\cdot x) $. Hence, $F_{C'}$ satisfies the filtration rule. $F_{1}$ can be extended to a map $\tilde{F_{1}}$ satisfying the filtration rule and the naturality rule. Finally, by lemma $3.2$  we get a bidegree $(n-1,p-2)$ map satisfying the filtration rule, the duality rule and the naturality rule. \\ \\ 
If $C'$ is a resolution configuration other than a dual tree, then we would have a counter\hyp example of dimension less than $n$, which contradicts the minimality of $n$. Thus, we may assume $C'$ is a dual tree as the number of ending circles of $C'$ is equal to $t \geq 2$ . As, $F_{1}((x_{k})^{-1}x) \neq 0 $ and $C'$ is a dual tree by lemma $3.3$, we conclude $(x_{k})^{-1}x$ must be equal to $1$. Thus, $x=x_{k}$. Lemma $3.3$ also implies,
\begin{eqnarray*}
F_{C}(x) & = & F_{C}(x_k) \\ 
&=& \tilde{F_1}(F_{0}(x_{k}))\\
&=& \tilde{F_{1}} (x_{k}^{-1}x_{k})\\
& = & \tilde{F_{1}} (1) \\ 
&= & 1
\end{eqnarray*}
It violates the hypothesis that $F_{C}$ satisfies the filtration rule which contradicts our assumption.  

\end{description}
\end{proof} 
%%%End of Theorem 3.3%%%%%%%%

%%\begin{corollary}
%%If C is a connected $n$\hyp dimensional unoriented configuration, then any associated map $F_{C}$ in bidegree $(n,m)$ with $m>2n$, satisfying the filtration, the duality rule and the naturality rule must be equal to $0$. i.e.  \[ F_{C} \equiv 0 \]

%%\begin{proof}

%%If $C$ is a connected resolution configuration other than a tree or a dual tree then the same argument as above will show that $F_C$ must be zero.  \\ 
%%Let $C$ be a tree with starting circle $\{x_i: 1 \leq i \leq n+1 \}$ and ending circle $\{y \}$. We have the associated map: \[F_{C}: \bigotimes_{i=1}^{n+1} V(x_i) \to V(y) \] 
%%If $F_{C}$ satisfies filtration rule then, \[F_{C}(x_{i_1}...x_{i_m})=y,\forall m\geq 1 \; \text{and} \; F_{C}(1) \in V(y). \]
%%The quantum grading in the first case is given by: \[-1+n-(-m+n+1-m)=2m-2 \]
%%As, $1 \leq m \leq n+1$, we conclude $2m-2 \leq 2n$. \\
%%The quantum grading of the maps: $F_{C}(1)=1$ and $F_{C}(1)=y$ are %%equal to $0$ and $-2$ respectively.  
%%%Dually, the same argument holds for dual tree. 

%%\end{proof}
%%\end{corollary}

\noindent Finally we can prove our main theorem in which $C$ is allowed to be disconnected. 
\begin{theorem}

Suppose $C$ is a $k$\hyp dimensional resolution configuration and $F_{C}: V_0(C) \to V_1(C)$ is the associated map in bidegree $(k,2k)$   satisfying the naturality, the disoriented, the duality, the extension, and the filtration rule. If $F_{C}$ is non\hyp zero, then $F_C$ must agree with Sarkar-Seed-Szabo formula.

%\begin{proof}
%%Suppose $C$ is a $k$\hyp dimensional tree with starting circles $\{x_i, 1 \leq i \leq k+1 \}$  and ending circle $\{y \}$ then the map: 
%%\begin{equation}
  %  \begin{array}{cc}
%F_{C}: \bigotimes_{i=1}^{k+1} V(x_i) \to V(y)\\
%F_{C}(x_1...x_k)=y
         
  %  \end{array}
%\end{equation}

\noindent% And dually, if $C$ is a $k$ dimensional dual tree with starting circle $\{x \}$ and ending circles $\{y_i, 1 \leq i \leq k+1 \}$ then the map: 

%\begin{equation}
%    \begin{array}{cc}
%F_{C}:V(x) \to \bigotimes_{i=1}^{k+1} V(y_i) \\
%F_{C}(1)=1
         
 %   \end{array}
%\end{equation}
%Then, $(5)$ and $(6)$ satisfy Naturality, Disoriented, Duality, Extension and Filtration Rule of bidegree $(k,2k)$ \\ 

\begin{proof}
If $C$ is a connected resolution configuration, then by Lemma $3.3$ and Theorem $3.4$, $C$ must either be a tree or a dual tree. Lemma $3.3$ also ensures that $F_{C}$ will agree with Sarkar-Seed-Szabo formula.\\ \\
If $C$ is not connected, then \[C= \sqcup_{i=1}^{j} C_i\] where each $C_{i}$ is a $k_i$\hyp dimensional connected resolution configuration and $\sum_{i=1}^{j}k_i=k$. If each $C_i$ is either a tree or a dual tree, then the map
\begin{equation}
    G_C := \bigotimes_{i=1}^{j} F_{C_i}
\end{equation}
 as defined by the formulas (\ref{tree}) and (\ref{dual-tree}) satisfies all the rules and it agrees with Sarkar-Seed-Szabo formula. We claim that if $F_{C}$ is non\hyp zero, then $F_{C}=G_{C}$ as above and $F_{C}$ must be zero when $C$ is not a disjoint union of trees and dual trees. \\ \\ 
 Let's assume that $F_{C}(x) \neq 0$ for some monomial $x=x_1 \otimes \cdot \cdot \cdot \otimes x_j \in \bigotimes_{i=1}^{j} {V_{0}} (C_{i}) $ where $x_{i} \in V_{0}(C_{i})$. Expressing $F_{C}(x)$ as the sum of monomials in $V_1(C)$, we notice that $F_{C}(x)$ can be written as a linear sum of maps of the form $\bigotimes_{i=1}^{j} F_{C_{i}}(x_i)$. 
  We observe that if $F_C$ satisfies the filtration rule, then each $F_{C_i}$ appearing in the expansion will also satisfy the filtration rule. Let $x(P)$ and $y(P)$ denote starting circle and the ending circle containing the point $P$ in $C_i$ and $x(P)|x$. This implies, in particular $x(P)|x_i$. By the filtration rule, $y(P)|F_{C}(x)$. In particular, $y(P)$ divides each of the monomial in $F_{C}(x)$. Thus, we conclude $y(P)|F_{C_{i}}(x_i)$.
  These $F_{C_{i}}$ have the following property:
  \begin{equation*}
      F_{C_{i}}(X) 
      \begin{cases}
        \neq 0 ,\, \,  if X=x_{i} \\
        =0,  \, \, otherwise
      \end{cases}
  \end{equation*}

\noindent We can extend each $F_{C_i}$ to $\tilde{F}_{C_{i}}$ satisfying the naturality rule, the filtration rule and the duality rule by lemma 3.2. If the quantum grading of $\tilde{F}_{C_{m}}$ is strictly less than $2k_m$ for some $m$, then there will be another $\tilde{F}_{C_{n}}$ with quantum grading strictly greater than $2k_{n}$ for some $n$. Then by theorem $3.4$, it will imply that $\tilde{F}_{C_n} \equiv 0$ contrary to the hypothesis $F_{C_n} \neq 0 $. Hence, each of $F_{C_i}$ must have quantum grading $2k_i$. As $F_{C_{i}} \neq 0$, then $C_i$ must either be a tree or a dual tree. Then by lemma 3.3, we conclude that they must be equal to the formula $(\ref{tree})$ or $(\ref{dual-tree})$. 
\end{proof}
\end{theorem}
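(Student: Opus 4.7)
The plan is to separate the argument into two cases: connected and disconnected. For the connected case, the work is already essentially done by the preceding results. I would observe that if $C$ is connected and $F_C \neq 0$, then Theorem 3.4 (applied with $p = 2k$) rules out every connected configuration other than trees and dual trees. Once $C$ is known to be a tree or a dual tree, Lemma 3.3 with $p = 2n = 2k$ forces $F_C$ to coincide with the Sarkar-Seed-Szabo formulas in (\ref{tree}) and (\ref{dual-tree}).

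For the disconnected case, I would first invoke the extension rule to reduce to the active sub-configuration, which lets me write $C = \bigsqcup_{i=1}^{j} C_i$ as a disjoint union of connected components of dimensions $k_i$ with $\sum k_i = k$. The decomposition of the sphere into neighborhoods of each component gives canonical tensor factorizations $V_0(C) = \bigotimes_i V_0(C_i)$ and $V_1(C) = \bigotimes_i V_1(C_i)$. Choosing a pure tensor $x = x_1 \otimes \cdots \otimes x_j$ with $F_C(x) \neq 0$ and expanding the image in the tensor basis of $V_1(C)$, every summand factors as a pure tensor of monomials, one in each $V_1(C_i)$. Reading off the $i$-th factor defines a component map $F_{C_i} \colon V_0(C_i) \to V_1(C_i)$.

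The next step is to verify that each $F_{C_i}$ inherits the filtration rule from $F_C$: for a point $P$ lying on a starting circle of $C_i$, divisibility of $x_i$ by $x(P)$ propagates through the tensor structure to force $y(P) \mid F_C(x)$, and hence $y(P) \mid F_{C_i}(x_i)$. Lemma 3.2 then extends each $F_{C_i}$ to a $\tilde{F}_{C_i}$ satisfying the naturality, filtration, and duality rules simultaneously, without disturbing its bidegree.

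The proof is concluded by a grading tally. Each $\tilde{F}_{C_i}$ is a nonzero associated map on a connected $k_i$-dimensional configuration; by Theorem 3.4 together with Lemma 3.3, its quantum degree cannot exceed $2k_i$ (otherwise it would have to vanish). Since $\sum_i \mathrm{gr}_q(\tilde{F}_{C_i}) = \mathrm{gr}_q(F_C) = 2k = \sum_i 2k_i$, every inequality must be an equality, forcing $\mathrm{gr}_q(\tilde{F}_{C_i}) = 2k_i$ for each $i$. Applying Theorem 3.4 and Lemma 3.3 to each $\tilde{F}_{C_i}$, each $C_i$ is a tree or a dual tree and $\tilde{F}_{C_i}$ equals the Sarkar-Seed-Szabo formula on its component. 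Reassembling by tensor product gives $F_C = G_C$. The anticipated obstacle is the bookkeeping involved in extracting well-defined component maps $F_{C_i}$ uniformly across all inputs and confirming that the filtration rule descends to each factor; once this is in place, the grading balance is the clean wrap-up.
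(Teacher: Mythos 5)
Your proposal is correct and follows essentially the same route as the paper: split into connected and disconnected cases, handle the connected case via Theorem~3.4 and Lemma~3.3, and in the disconnected case extract component maps $F_{C_i}$, verify they inherit the filtration rule, extend them via Lemma~3.2, and close with the quantum-grading tally. The only cosmetic differences are that you explicitly invoke the extension rule up front to strip passive circles (the paper leaves this implicit) and you phrase the grading step as ``each $\mathrm{gr}_q(\tilde F_{C_i}) \le 2k_i$ and the total is $2k$, so equality everywhere,'' whereas the paper argues by contradiction that a deficit in one factor forces an excess in another; these are logically equivalent.
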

\noindent $\mathscr{C}$ denotes the collection of resolution configurations consisting of disjoint union of trees or dual trees. 
Next, we show that there does not exist a sub\hyp family $\mathscr{C}' \subset \mathscr{C}$ that suffices to define the total complex $CTot(L)$.  We notice that $\mathscr{C}$ is the disjoint union \[ \mathscr{C}= \sqcup_{i=1}^{\infty} \mathscr{C}_{i} \] where $\mathscr{C}_{i}$ is the collection of $i$\hyp dimensional resolution configurations consisting of disjoint union of trees or dual trees. We use induction on $i$ to show that the entire $\mathscr{C}_{i}$ is needed to define $CTot(L)$. The case $i=1$ is obvious as it is needed to define the Khovanov complex $CKh(L)$. Thus, we may assume that the entire $\mathscr{C}_{i}$ is needed for all $i \leq (n-1)$ to define $CTot(L)$. We will show that the $\mathscr{C}_{n}$ is also needed for $CTot(L)$. From now on, $C$ will always denote a resolution configuration which is a disjoint union of trees or dual trees and $F_{C}$ will denote the associated non\hyp zero map as in $(6)$. 
\begin{lemma}
There exists a $n$\hyp dimensional tree $C$ such that the associated map $F_{C} \neq 0$ in $CTot(L)$. 
\end{lemma}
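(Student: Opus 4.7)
The plan is to argue by contradiction using the Maurer--Cartan equation $\delta_{Tot}^2 = 0$ together with the induction hypothesis that $\mathscr{C}_i$ is needed for all $i \leq n-1$.

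First, I would fix an explicit $n$-dimensional tree $C$ --- a convenient choice is the \emph{fan} $C = (x_0, x_1,\ldots,x_n;\gamma_1,\ldots,\gamma_n)$ in which each arc $\gamma_i$ joins the hub circle $x_0$ to the leaf circle $x_i$ --- and realize it as an $n$-dimensional sub-cube of the resolution cube of some link diagram $L$ whose remaining crossings contribute only passive circles. By the extension rule it then suffices to track the contribution of $C$ on the active part.

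Next, suppose for contradiction that $F_C = 0$ for every $n$-dimensional tree $C$. The coefficient of $HW^{n-1}$ in the identity $\delta_{Tot}^2 = 0$ reads
\begin{equation*}
\sum_{\substack{i+j=n+1 \\ i,j\geq 1}} \bigl(d_i h_j + h_j d_i\bigr) = 0.
\end{equation*}
I would evaluate this identity on the monomial $x_0 x_1 \cdots x_n$ placed at the $(0,\ldots,0)$ corner of the sub-cube corresponding to $C$ and project onto the monomial $y$ (the ending circle of $C$) at the opposite corner. The summands involving an $n$-dimensional tree piece of $h_n$ vanish by the contradiction hypothesis, so the relation reduces to a sum of compositions of $h_j$ with Szab\'o maps $d_{n+1-j}$ for $j \leq n-1$.

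Third, by the induction hypothesis, the lower-dimensional tree and dual tree maps $F_{C'}$ with $|C'| \leq n-1$ are non-trivial and are all required to be present in $CTot(L)$. By arranging the fan $C$ and the ambient diagram $L$ carefully, the filtration rule forces most of the compositions in the reduced relation to vanish on the specific input--output pair $(x_0 x_1 \cdots x_n,\, y)$. One can set this up so that exactly one non-cancelling composition of an $h_j$ and a $d_{n+1-j}$ survives the projection onto $y$, producing a non-zero value on the left-hand side and contradicting the identity. This forces the existence of an $n$-dimensional tree $C$ with $F_C \neq 0$.

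The main obstacle is the combinatorial bookkeeping required to isolate a single non-cancelling composition: the fan and its ambient diagram must be chosen so that the filtration rule eliminates all but one term of $\sum(d_i h_j + h_j d_i)$ on the chosen generator, and one must then verify via the induction hypothesis that the surviving term is genuinely non-zero and not killed by some accidental cancellation among sub-configurations.
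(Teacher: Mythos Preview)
Your overall strategy---test the degree-$(n{+}1)$ Maurer--Cartan identity on a specific generator and derive a contradiction---is the same as the paper's. However, as written, the plan has a dimension mismatch that makes it collapse before any computation begins.

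The identity
\[
\sum_{i+j=n+1}\bigl(d_i h_j + h_j d_i\bigr)=0
\]
raises homological degree by $n{+}1$. If you place your input at the $(0,\dots,0)$ vertex of the \emph{$n$-dimensional} sub-cube corresponding to the fan $C$ and then ``project onto the monomial $y$ at the opposite corner,'' you are projecting to homological degree $n$, where this sum is identically zero for grading reasons. Nothing survives---not because of cancellation, but because the map simply does not land there. Adding crossings that ``contribute only passive circles'' does not help: passive circles carry no arcs and hence no extra cube directions.

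The paper avoids this by working with an $(n{+}1)$-crossing link whose $(0,\dots,0)$ resolution is \emph{not} a tree: it is your fan together with one additional arc closing a cycle (between two of the outer leaves). This $(n{+}1)$-dimensional cube has two ending circles $y_1,y_2$ at the top vertex, and now the degree-$(n{+}1)$ identity genuinely connects the bottom vertex to the top. The crucial features of this configuration are: (i) every arc at the bottom is a merge, so $h_n d_1(\prod x_i)=0$ because the Khovanov merge kills $x\otimes x$; (ii) exactly three of the $n$-dimensional initial faces are trees (removing the extra arc, or removing one of the two fan arcs adjacent to it), while the others have a passive circle over a cyclic active part and hence lie outside $\mathscr{C}_n$; (iii) each of those three trees contributes $y_1y_2$, and $3\cdot y_1y_2=y_1y_2$ over $\mathbb{F}_2$.

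There is also a second point worth noting. You propose to compute the ``lower'' side $\sum_{j<n}[d_{n+1-j}:h_j]$ directly and find a single surviving term. The paper does not do this; it computes the ``upper'' side $[d_1:h_n]$ using the already-established Sarkar--Seed--Szab\'o formula for $h_n$, obtains $y_1y_2\neq 0$, and then uses the validity of the Maurer--Cartan equation for the full theory to conclude that the lower side equals $y_1y_2$ as well. Consequently, if one tried to set $F_C=0$ on all $n$-dimensional trees, the upper side would become $0$ on this link while the lower side would remain $y_1y_2$, breaking the equation. This indirect route is considerably cleaner than trying to isolate a single non-cancelling lower-order composition.

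In summary: you need an $(n{+}1)$-dimensional configuration whose $n$-dimensional faces include trees, not a bare $n$-dimensional tree; and the efficient way to exhibit the non-vanishing is to compute $[d_1:h_n]$ with the SSS formula rather than the lower terms.
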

\begin{proof}
Let $L$ denotes the $(n+1)$\hyp crossing knot whose (0,..,0) resolution looks like 
\begin{center}
\begin{tikzpicture}
\draw[blue] (0,0) circle [radius=0.5];
\draw[blue] (-1.5,-0.75) circle [radius=0.5];
\draw[blue] (-1.5,0.75) circle [radius=0.5];
\draw[blue] (1.5,-0.75) circle [radius=0.5];
\draw[blue] (-1.03,0.579)--(-0.4,0.3);
\draw[blue] (-1.05,-0.532)--(-0.4,-0.3);
\draw[blue] (1.05,-0.532)--(0.43,-0.255);
\draw[blue] (-1.08,-1.021)--(1.08,-1.021);
\node at (0,0) {$x_{n+1}$}; 
\node at (-1.5,-0.75) {$x_{1}$}; 
\node at (-1.5,0.75) {$x_{2}$}; 
\node at (1.5,-0.75) {$x_{n}$}; 
\node at (-0.75,-0.375+0.2) {$1$}; 
\node at (-0.75+0.1,0.375+0.3) {$2$}; 
\node at (0.75,-0.375+0.2) {$n$}; 
\node at (0,-1.25) {$n+1$}; 
\draw[blue] (1-0.5,1.75) circle [radius=0.05];
\draw[blue] (1.25-0.5,1.5) circle [radius=0.05];
\draw[blue] (1.5-0.5,1.25) circle [radius=0.05];
\draw[blue] (1.5,0.75) circle [radius=0.5];
\draw[blue] (0.44,0.237)--(1.07,0.495);
\node at (1.5,0.75) {$x_{n-1}$};
\node at (0,-2) {\textit{Figure 4. The resolution configuration associated to $L$}};
\end{tikzpicture}
\end{center}
Here the numbers $\{1,2,...,n+1 \}$ denote an ordering of the crossings of $L$.
The Maurer\hyp Cartan equation for $CTot(L)$ in degree $(n+1)$ takes the form \[ \sum_{i+j=n+1} [d_i:h_j]=0. \] It can be written as \[ \sum_{i+j=n+1, j <n}[d_i:h_j]=[d_1:h_n].\]
We notice that the left hand side does not depend on $h_{n}$. Thus, by induction hypothesis the left hand side can be computed using Sarkar-Seed-Szabo formula. We observe that the right hand side depends on the definition of $F_{C}$ where $C$ is a tree. We will show that $\left([d_1:h_n](\prod_{i=1}^{n+1}x_{i}) \neq 0 \right)$. This non\hyp zero term must be equal to the right hand side $\left(\sum_{i+j=n+1, j <n}[d_i:h_j](\prod_{i=1}^{n+1}x_{i}) \right)$. From this observation we shall conclude that $F_{C}$ must be non\hyp zero for some tree. \\ \\ 
We notice that there are two ending circles which we call $y_1$ and $y_2$. Thus, $[d_1:h_n]$ has the following form: 
\[[d_1:h_n]: \bigotimes_{i=1}^{n+1} V(x_{i}) \to V(y_1) \otimes V(y_2) \]
We next observe that, $\left(h_{n}d_{1}(\prod_{i=1}^{n+1}x_{i})=0\right)$. This is because the Khovanov differential $d_1$ sends the product of two circles in a join to zero and all the arcs appearing in $(0,0,...,0)$ resolution of the link $L$ are joins. \\ 
To compute $\left(d_1h_{n}(\prod_{i=1}^{n+1}x_{i})\right)$, we notice that there are exactly three $n$\hyp dimensional faces that contribute non\hyp zero in the computation  $\left(d_1h_{n}(\prod_{i=1}^{n+1}x_{i})\right)$. To be precise,
let $I=(0,0,...,0)$, $J_1=(0,1,1,...,1)$, $J_{2}=(1,1,...,1,0,1)$, $J_{3}=(1,1,...,1,0)$ and $K=(1,1,...,1)$ denote the specific coordinates in the Khovanov cube of $L$. Then the $n$\hyp dimensional faces $(I,J_1),(I,J_2)$ and $(I,J_3)$ are the only faces that are trees. Thus by Sarkar-Seed-Szabo formula, we have the following equation: 
\begin{align*}
    \left(d_1h_{n}\right)(\prod_{i=1}^{n+1}x_{i}) &= F_{J_1, K} F_{I, J_1}(\prod_{i=1}^{n+1}x_{i}) + F_{J_{2}, K} F_{I, J_2}(\prod_{i=1}^{n+1}x_{i})+ F_{J_{3}, K} F_{I, J_3}(\prod_{i=1}^{n+1}x_{i}) \\
    &= y_1y_2+y_1y_2+y_1y_2 \\
    &=y_1y_2
\end{align*}
Thus, we conclude that $F_{C}$ must be non\hyp zero for a $n$\hyp dimensional tree $C$. 
\end{proof}
\noindent Let $(C_1,x_1)$ and $(C_2,x_2)$ denote two resolution configurations which are disjoint union of trees and $x_i$ denotes a distinguished circle in $C_i$. Let $(C_{1},x_1)* (C_{2},x_2)$ denotes the resolution configuration which is constructed by taking disjoint union of $C_1$ and $C_2$ and identifying $x_1$ and $x_2$. We will abbreviate the notation as $C_1 *C_2$ when the distinguished circles are understood. We notice that $C_1 * C_2$ is also a resolution configuration consisting of disjoint union of trees.  Let $C$ denotes the resolution configuration $C=C_1 \sqcup C_2$. We prove the following lemma: 
\begin{lemma}
$F_C$ is non\hyp zero in Sarkar-Seed-Szabo formula if and only if $F_{C_1*C_2}$ is non\hyp zero. 
\end{lemma}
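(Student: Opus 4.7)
The plan is to invoke the characterization (equation $(6)$ and the definitions preceding it in Section 2) that $F_R \neq 0$ in the Sarkar-Seed-Szabo formula if and only if $R$ is a disjoint union of trees and dual trees. Under this criterion, both sides of the biconditional reduce to showing that the operations of disjoint union and of glueing at a distinguished circle preserve the property of being a disjoint union of trees.

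Since $C_1, C_2$ are disjoint unions of trees by hypothesis, so is $C = C_1 \sqcup C_2$, giving $F_C \neq 0$ immediately. For the other direction, let $T_i \subseteq C_i$ denote the connected tree component containing $x_i$ (which may degenerate to a single passive $0$-dimensional circle), and write $C_i = T_i \sqcup C_i'$. Then
\[
C_1 * C_2 \;=\; (T_1 * T_2) \sqcup C_1' \sqcup C_2',
\]
so it suffices to verify that $T_1 * T_2$ is again a tree. If $T_i$ has dimension $k_i$, then $T_1 * T_2$ has $(k_1+1)+(k_2+1)-1 = k_1 + k_2 + 1$ starting circles, $k_1 + k_2$ arcs, and is connected because $T_1, T_2$ are connected and share the circle $x_1 = x_2$. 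These counts exactly match those of a $(k_1+k_2)$-dimensional tree.

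The main obstacle is verifying that $T_1 * T_2$ has exactly one ending circle. The strategy is to associate to each tree $T$ its incidence graph $G(T)$, whose vertices are the starting circles and whose edges are the arcs. In a tree $T$ this graph is connected (from topological connectedness), loopless (an arc with both endpoints on the same circle would split that circle under surgery, contradicting the single-ending-circle condition for $T$), and has exactly one more vertex than edges; hence $G(T)$ is a graph-theoretic tree. Glueing $G(T_1)$ and $G(T_2)$ at the common vertex $x_1 = x_2$ produces another connected loopless graph with $k_1+k_2$ edges on $k_1+k_2+1$ vertices, again a graph-theoretic tree. Performing the surgeries of $T_1 * T_2$ in leaf-first order, every arc still connects two distinct components of the already-performed subgraph, so every surgery is a merge and the $k_1+k_2+1$ starting circles collapse to a single ending circle. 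Thus $T_1 * T_2$ is a $(k_1+k_2)$-dimensional tree, $C_1 * C_2$ is a disjoint union of trees, and $F_{C_1 * C_2} \neq 0$.
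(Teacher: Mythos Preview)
Your argument is internally correct but it proves something the paper has already dispensed with in the sentence \emph{preceding} the lemma: ``We notice that $C_1*C_2$ is also a resolution configuration consisting of disjoint union of trees.''  Under your literal reading of the statement, both $F_C$ and $F_{C_1*C_2}$ are \emph{always} non-zero (since $C_1$, $C_2$ are disjoint unions of trees by hypothesis), so the biconditional is vacuous and there is nothing left for the lemma to assert.

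The paper means something else.  The surrounding discussion is about showing that no proper sub-family $\mathscr{C}'\subsetneq\mathscr{C}$ suffices to define $CTot(L)$; the phrase ``non-zero in Sarkar--Seed--Szabo formula'' should be read as ``forced to be non-zero in any assignment of the $h_k$ that satisfies $\delta_{Tot}^2=0$, given the inductive hypothesis on lower dimensions.''  Accordingly, the paper's proof does not look at the explicit tree/dual-tree criterion at all.  It builds a $(k+l+1)$-crossing link $L$ whose $(0,\dots,0)$-resolution is $C_1*C_2$ together with one extra splitting arc, and then computes the degree-$(k+l+1)$ Maurer--Cartan relation on the monomial $\prod_i x_i$.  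The only surviving contributions are
\[
(h_{k+l}d_1)(\textstyle\prod_i x_i)=F_C(\cdots)=\prod_j y_j
\quad\text{and}\quad
(d_1 h_{k+l})(\textstyle\prod_i x_i)=F_{C_1*C_2}(\cdots)=\prod_j y_j,
\]
while $\sum_{i+j=k+l+1,\,j<k+l}[d_i:h_j]$ vanishes on this element by the induction hypothesis.  Hence in \emph{any} candidate construction the two contributions must cancel, so $F_C$ is forced to be non-zero exactly when $F_{C_1*C_2}$ is.  This Maurer--Cartan linkage is precisely what the next lemma (``$F_C$ is needed to define $CTot(\cdot)$'') invokes; your combinatorial verification that $C_1*C_2$ is again a union of trees cannot substitute for it.
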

\begin{proof}
Let $k$ and $l$ denote the dimension of $C_1$ and $C_2$ respectively. Let $C'$ denotes the $k+l+1$ dimensional resolution configuration which looks like: 
\begin{center}
\begin{tikzpicture}
\draw[blue] (0,0) circle [radius=1]; 
\draw[blue,dotted] (2.5,0) circle [radius=2];
\draw[blue,dotted] (-2.5,0) circle [radius=2];
\draw[blue] (-2,1) circle [radius=0.5];
\draw[blue] (-2,-1) circle [radius=0.5];
\draw[blue] (-1.54,0.804)--(-0.8,0.6);
\draw[blue] (-1.54,-0.804)--(-0.8,-0.6);
\draw[blue]  (-3.25,1)circle [radius=0.25];
\draw[blue]  (-3.25,0)circle [radius=0.25];
\draw[blue] (-3.25,0.75)--(-3.25,0.25);
\draw[blue] (2,0.5) circle [radius=0.5]; 
\draw[blue] (2,-1) circle [radius=0.5];
\draw[blue] (2,0)--(2,-0.5);
\draw[blue] (0.9,0.436)--(1.5,0.5);
\node at (-2.5,-2.5) {$C_{1}$};
\node at (2.5,-2.5) {$C_{2}$};
\node at (0,-1.5) {$x_{1}=x_{2}$};
\draw[blue] (0,-1)--(0,1);
\node at (0.25,0) {$1$};
\node at (0,-3) {\textit{Figure 5: The resolution configuration} $C'$}; 
\end{tikzpicture}
\end{center}

\noindent Let $L$ denotes the $(k+l+1)$\hyp crossing link whose $(0,0,...,0)$ resolution looks like the resolution configuration as above. We enumerate the crossings of $L$ in such a fashion that the first crossing corresponds to the splitting arc as in the picture.
We will show that \[ [d_1:h_{k+l}]=0. \]
Let $\{\alpha_1,..,\alpha_{m} \}$ and $\{\beta_1,..,\beta_{n} \}$ denote the starting circles and the ending circles of $C$ respectively i.e. $\alpha_{i}$\hyp circles appear in the $(0,0,..,0)$ and $\beta_{i}$\hyp circles appear in the $(1,1,...,1)$ resolution of $L$. We have the associated map: \[\left([d_1:h_{k+l}]:\bigotimes_{i=1}^{m}V(x_i) \to \bigotimes_{j=1}^{n}V(y_{j}) \right) \] 
Let $I=(0,0,..,0)$, $J_{1}=(1,0,0,..,0)$, $J_{2}=(0,1,1,...,1)$ and $K=(1,1,...,1)$ denote the specific coordinates in the Khovanov cube of $L$. We notice that $d_1(\prod_{i=1}^{m}x_{i})$ is non\hyp zero only along the one dimensional face $(I,J_1)$. This is because $(I,J_1)$ is a split and the other faces are joins and Khovanov differential $d_1$ sends the product of two circles to zero for a join. Thus, we have the following equation: 
\[ \left(h_{k+l}d_{1}\right)(\prod_{i=1}^{m}x_{i})= F_{J_1,K}F_{I,J_1}(\prod_{i=1}^{m}x_{i})= \prod_{j=1}^{n}y_{j}. \]
We can similarly compute: 
\[\left(d_1h_{k+l}\right)(\prod_{i=1}^{m}x_{i})= F_{J_{2},K} F_{I,J_2}(\prod_{i=1}^{m}x_{i})= \prod_{j=1}^{n}y_{j}. \]
We observe that, $F_{J_1,K}= F_{C}$ and $F_{I,J_2}=F_{C_1*C_2}$. As, \[ \sum_{i+j=k+l+1, j<k+l} [d_i:h_j]=0. \] In this case, we conclude that $F_{C}$ is non\hyp zero if and only if $F_{C_1*C_2}$ is non\hyp zero. 
\end{proof}
\begin{lemma}
If $C$ is a $n$\hyp dimensional resolution configuration which is a disjoint union of trees or dually, if $C$ consists of disjoint union of dual trees, then $F_{C}$ is needed to define $CTot(.)$
\end{lemma}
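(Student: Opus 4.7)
The plan is to proceed by induction on the dimension $n$, using the two preceding lemmas as the crucial inputs. By the duality rule, it suffices to treat configurations $C$ that are disjoint unions of trees, since the disjoint-union-of-dual-trees case is recovered by applying duality to $m(C^{*})$. The base case $n=1$ is immediate: $\mathscr{C}_1$ consists precisely of the elementary merge/split arcs that constitute the Khovanov differential $d_1$, whose non-vanishing is built into the very definition of $CKh(L)$.

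For the inductive step, assume the conclusion holds for all dimensions strictly less than $n$, so every $F_{C'}$ with $C' \in \mathscr{C}_i$ and $i<n$ is non-zero. Let $C = T_1 \sqcup \cdots \sqcup T_m \in \mathscr{C}_n$. If $m \geq 2$, then each tree $T_j$ has dimension strictly smaller than $n$, and the inductive hypothesis gives $F_{T_j}\neq 0$. By the Sarkar-Seed-Szabo formula established in Theorem 3.5, one has
\[
F_C \;=\; F_{T_1}\otimes F_{T_2}\otimes\cdots\otimes F_{T_m},
\]
which is manifestly non-zero. Iterating Lemma 3.7 (gluing one pair at a time along distinguished circles) then shows that $F_C$ participates non-trivially in the Maurer-Cartan equation for $CTot(L)$ and hence is genuinely required to define the total complex.

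If $m = 1$, so $C = T$ is a single $n$-dimensional tree, I exploit the fact that $n \geq 2$ forces the underlying graph of $T$ (with circles as vertices and arcs as edges) to possess at least one starting circle $x$ of degree at least two. Choosing such an $x$ and partitioning the arcs incident to $x$ into two non-empty groups yields a decomposition $T = T_1 *_x T_2$ with $\dim T_1, \dim T_2 \geq 1$ and $\dim T_1 + \dim T_2 = n$. Lemma 3.7 then supplies the equivalence $F_T \neq 0 \iff F_{T_1 \sqcup T_2} \neq 0$, and the right-hand side is non-zero by the multi-component case just treated. Lemma 3.6 serves as an independent anchor confirming that at least one $n$-dimensional tree has non-zero associated map, ensuring that the equivalence supplied by Lemma 3.7 is not applied vacuously at dimension $n$.

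The principal obstacle is the combinatorial bookkeeping around the split $T = T_1 *_x T_2$: one must verify that the partition of the arcs at $x$ produces two configurations $T_i$ whose arcs remain disjoint and embedded in $S^{2}$, that each $T_i$ is itself a bona fide connected tree of the claimed dimension, and that identifying the two copies of $x$ recovers $T$ in a fashion compatible with the hypothesis of Lemma 3.7. Once these planarity and connectedness checks are in place, the remainder of the induction is essentially automatic, and the duality rule delivers the dual-tree half of the statement with no further work.
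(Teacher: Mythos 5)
Your proof identifies the same two ingredients the paper uses---the explicit $n$-dimensional star tree from Lemma~3.6 as an anchor, and Lemma~3.7 as a gluing/ungluing move that preserves the property of being needed in the Maurer--Cartan equation. The core idea matches the paper's, which simply says ``by repeated application of Lemma~3.6, we conclude\ldots''.

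However, the induction on the dimension $n$ that you build your proof around does no real work and actually creates a circularity. Lemma~3.7 relates $F_{C_1\sqcup C_2}$ to $F_{C_1*C_2}$, and both configurations have the \emph{same} total dimension $n$; it never lets you descend to dimension less than $n$. So the inductive hypothesis ``every $F_{C'}$ with $\dim C'<n$ is non-zero'' is invoked at the start of the $m\geq 2$ case but never genuinely used. The argument you then run is: $m=1$ reduces to $m\geq 2$ by splitting at a vertex, and $m\geq 2$ reduces to $m=1$ by gluing components together---without ever forcing the chain of equivalences to terminate at the star tree, whose non-vanishing is the only thing Lemma~3.6 actually establishes at dimension $n$. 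The correct logical skeleton is entirely within fixed dimension $n$: the class $\mathscr{C}_n$ of $n$-dimensional disjoint unions of trees is connected under the moves of Lemma~3.7 (every such configuration can be unglued down to $n$ disjoint single-arc pieces, and the star tree is reachable from those by re-gluing), and Lemma~3.6 produces one member of $\mathscr{C}_n$ whose map is required, so all members are required. No cross-dimensional induction is needed.

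Separately, the sentence ``By the Sarkar-Seed-Szabo formula\ldots\ $F_C=F_{T_1}\otimes\cdots\otimes F_{T_m}$, which is manifestly non-zero'' is a category error. The SSS formula is non-zero for every disjoint union of trees by its very definition; the content of the lemma is that \emph{any} family of maps satisfying the Maurer--Cartan equation must assign a non-zero map to $C$, i.e.\ that $F_C$ cannot be deleted from the construction. The tensor-product formula is irrelevant to that claim, and invoking it gives the false impression the conclusion has already been reached. You should delete this step and let Lemmas~3.6 and~3.7 carry the entire argument, spelling out the gluing path to the star tree rather than asserting the iteration works.
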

\begin{proof}
By lemma $3.5$, we know that there exists a tree $C$ such that the associated map $F_{C}$ is needed in $CTot(.)$. By repeated application of lemma $3.6$, we conclude that $F_{C}$ is needed in $CTot(.)$ when $C$ is a disjoint union of trees. \\ \\ 
Dually, if $C$ consists of a disjoint union of dual trees, then $F_{C}$ is needed to define $CTot(.)$. 
\end{proof}

\noindent Finally, we will show that if $C$ is a disjoint union of trees and dual trees, then $F_{C}$ is also needed in $CTot(L)$. Before we go into the proof, we need to define \textbf{degree one circle} and \textbf{degree one arc}. \\
A circle $x$ in a resolution configuration $C$ is said to be a \textbf{degree one circle} if $x$ is connected to other circles of $C$ by exactly one arc. Dually, an arc $\gamma$ in a resolution configuration $C$ is said to be a \textbf{degree one arc} if its dual arc is connected to a degree one circle in its dual resolution configuration $C^{*}$.

\noindent Suppose $C=D_1 \sqcup D_2$ is a $n=k+l$\hyp  dimensional resolution configuration where $D_1 \, (\textit{resp.}\,D_2)$ is a $k \, (\textit{resp.} \,l)$\hyp dimensional resolution configuration which is a disjoint union of trees (\textit{resp.} dual trees).  We choose two circles $x_1 \in D_1$ and $z_1 \in D_2$ such that $x_1$ and $z_1$ can be joined by an arc $\gamma$, where $\gamma$ intersects $z_1$ in the region  defined by a \textbf{degree one arc} in $z_1$  that does not intersect any other arcs of $D_2$ (see. Figure 6). We call the \textbf{degree one arc} in $z_1$ by $\gamma^{'}$. Let $C'$ denotes the $n=k+l$\hyp dimensional resolution configuration which is obtained from $C \cup \{\gamma \}$ by doing a surgery along $\gamma^{'}$.  We notice that $C'$ is also a disjoint union of trees and dual trees ${D_1}' \sqcup {D_2}'$, where ${D_1}' (\textit{resp.${D_2}'$})$ is a disjoint union of trees (\textit{resp.} dual trees).
\begin{lemma}
$F_{C}$ is non\hyp zero if and only if $F_{C'}$ is non\hyp zero in Sarkar\hyp Seed\hyp Szabo construction. 
\begin{center}
\begin{tikzpicture}
\draw[blue] (0,0) circle [radius=1];
\draw[blue] (-2,-2) circle [radius=0.5];
\draw[blue] (0,-2) circle [radius=0.5];
\draw[blue] (-0.87,-0.493)--(-1.65,-1.643);
\draw[blue] (0,-1)--(0,-1.5);
\draw[blue] (4,0) circle [radius=1];
\draw[blue] (1,0)--(3,0);
\draw[blue] (3.15,0.527) to [out=-45, in=90] (3.5,0);
\draw[blue] (3.15,-0.527) to [out=45, in=-90] (3.5,0);
\draw[blue] (4,1)--(4,-1);
\draw[blue] (4.52,0.854) to [out=-120, in=180] (5,0);
\node at (0,0) {$x_1$};
\node at (2,0.25) {$\gamma$};
\node at (3.75,0) {$\gamma^{'}$};
\node at (4,-1.25) {$z_1$}; 
\node at (2,-0.25) {$1$}; 
\node at (3.25,0) {$2$};

\end{tikzpicture}
\end{center}
\begin{center}
\begin{tikzpicture}
\draw[blue] (6+2,0) circle [radius=1];
\draw[blue] (-2+6+2,-2) circle [radius=0.5];
\draw[blue] (6+2,-2) circle [radius=0.5];
\draw[blue] (-0.87+6+2,-0.493)--(-1.65+6+2,-1.643);
\draw[blue] (8,-1)--(8,-1.5);
\draw[blue] (10,0) circle [radius=0.5];
\draw[blue] (12,0) circle [radius=1];
\draw[blue] (12,1)--(12,-1);
\draw[blue] (12.52,0.854) to [out=-120, in=180] (13,0);
\draw[blue] (9,0)--(9.5,0);
\node at (8,0) {$x_1$}; 
\node at (9.25,0.25) {$\gamma$};
\node at (10,0) {$w_1$}; 
\node at (12,-1.25) {$w_2$}; 
\node at (10,-3) {Figure 6: $C \cup \{ \gamma \}$ \textit{and} $C'$}; 

\end{tikzpicture}
\end{center}
\begin{proof}

As before, we start with a $\left( k+l+1 \right)$\hyp crossing link diagram $L$ whose $(0,0,...,0)$ resolution looks like $C \cup \{\gamma \}$.  We enumerate the crossings in such a way that arc $\gamma$ corresponds to the first crossing and the arc $\gamma'$ corresponds to the second crossing of $L$. Let $w_1$ and $w_2$ denote the circles in $C'$ which are obtained from $C \cup \{\gamma \}$ by doing a surgery along $\gamma'$. Let $\{x_1,...,x_s \}$ denote the starting circles of $D_1$ and $\{y_1,...,y_t \}$ denote the ending circles of ${D_1}'$.  We will show that 
\[ \left([d_1:h_{k+l}] \right) (\prod_{i=1}^{s}x_i)=0. \]
Let $I=(0,0,..,0)$, $J_1=(0,1,0,...,0)$, $J_2=(0,1,1,..,1)$ and $K=(1,1,...,1)$ denote the specific coordinates in the Khovanov cube of $L$. In order to compute $\left(h_{k+l}d_1\right)(\prod_{i=1}^{s}x_i)$, we observe that it is non\hyp zero only for $F_{K,J_1} F_{J_1,I}$. This is because, doing surgery along any arcs other than $\gamma^{'}$ will not produce a resolution configuration that is a disjoint union of trees or dual trees. Thus, 
\begin{align*}
    \left(h_{k+l}d_1 \right) (\prod_{i=1}^{s}x_i) &= F_{J_1,K} F_{I,J_1} (\prod_{i=1}^{s}x_i) \\
&= F_{J_1,K}\left((\prod_{i=1}^{s}x_i) (w_1+w_2) \right) \\
&= F_{J_1,K} (x_1...x_s.w_1)+ F_{K,J_1}(x_1...x_s.w_2)\\
&= F_{C'}(x_1...x_s.w_1)+F_{C'}(x_1...x_s.w_2)\\
&= \prod_{i=1}^{t} y_{i} +0 \\ 
&= \prod_{i=1}^{t} y_{i}
\end{align*}
Similarly, we notice that to compute $\left(d_1h_{k+l}\right)(\prod_{i=1}^{s}x_i)$ we need to only consider $F_{J_2,K}F_{I,J_2}$. This is because none of $(k+l)$\hyp dimensional faces $(I,J)$ other than $(I,J_2)$ are a resolution configuration consisting of disjoint union of trees or dual trees. Thus, 
\begin{align*}
    \left(d_1h_{k+l} \right) (\prod_{i=1}^{s}x_i) &= F_{K,J_2}F_{J_2,I} (\prod_{i=1}^{s}x_i) \\ 
    &= F_{K,J_2}F_{C} (\prod_{i=1}^{s}x_i) \\
    &= \prod_{i=1}^{t} y_{i}
\end{align*}

\noindent By, the Maurer-Cartan equation for $CTot(L)$ in degree $(k+l+1)$ we notice that, 
 \[\sum_{i+j=k+l+1, j<k+l} [d_i: h_j](\prod_{i=1}^{s}x_i)= [d_1: h_{k+l}](\prod_{i=1}^{s}x_i)=0 \]
 This shows that $F_{C}$ is non\hyp zero if and only if $F_{C'}$ is non\hyp zero. 
\end{proof}
\end{lemma}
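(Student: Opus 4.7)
The plan is to apply the Maurer--Cartan equation for $CTot(L)$ at a carefully chosen link $L$, evaluated on a distinguished input, and then isolate the unique cube faces in which $F_C$ and $F_{C'}$ appear.

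First, I realize $C\cup\{\gamma\}$ as the $(0,\dots,0)$-resolution of a $(k+l+1)$-crossing link diagram $L$, ordering the crossings so that $\gamma$ is first and the degree-one arc $\gamma'$ is second. The degree $(k+l+1)$ component of the Maurer--Cartan equation reads
\[ \sum_{i+j=k+l+1}[d_i:h_j]\;=\;0, \]
which in $\mathbb{F}_2$ recasts $[d_1:h_{k+l}]$ as a sum of terms involving only $h_j$ with $j<k+l$; by the induction hypothesis these agree with the Sarkar--Seed--Szabo formula, so the right-hand side is completely determined.

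Next, I evaluate $[d_1:h_{k+l}]$ at the monomial $\prod_{i=1}^{s}x_i$, where $\{x_1,\dots,x_s\}$ are the starting circles of $D_1$, and perform a case analysis to pin down the contributing cube faces. For the composition $h_{k+l}\circ d_1$, each candidate $F_{J,K}\circ F_{I,J}$ with $|J|=1$ is eliminated either because $F_{I,J}(\prod x_i)=0$ (when the corresponding arc is a join of two $x_i$-marked circles, killed by $x_i^2=0$) or because the remaining $(k+l)$-dimensional configuration from $J$ to $K$ fails to be a disjoint union of trees and dual trees, so $F_{J,K}=0$ by Theorem $3.4$. Surgery along $\gamma$ itself falls into the second category: merging $x_1$ with $z_1$ produces a single circle to which both the tree-arcs of $D_1$ and the dual-tree-arcs of $D_2$ are attached. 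The only surviving face is $J_1=(0,1,0,\dots,0)$, and the contribution is $F_{C'}(\prod x_i\cdot w_1)+F_{C'}(\prod x_i\cdot w_2)$; the filtration rule, together with the fact that $w_2$ lies in the dual-tree component of $C'$, makes the second summand vanish. A symmetric argument for $d_1\circ h_{k+l}$ isolates $J_2=(0,1,\dots,1)$, and the contribution reduces to $F_{J_2,K}\circ F_C(\prod x_i)$.

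Combining these computations, the Maurer--Cartan equation at $\prod x_i$ equates $F_{C'}(\prod x_i\cdot w_1)$ with $F_{J_2,K}\circ F_C(\prod x_i)$, modulo terms that vanish by the same tree/dual-tree analysis applied to the lower-dimensional maps coming from the induction hypothesis. Each non-zero side evaluates to $\prod_{j=1}^{t}y_j$ by Lemma $3.3$ applied componentwise, so $F_C$ and $F_{C'}$ must be simultaneously zero or simultaneously non-zero, which is the claimed equivalence. The principal obstacle is the case analysis isolating $(I,J_1)$ and $(I,J_2)$ from all competing one-coordinate and co-one-coordinate faces: the degree-one hypothesis on $\gamma'$ is indispensable, guaranteeing that surgery along $\gamma'$ separates $z_1$ into exactly two circles, one of which absorbs the connection via $\gamma$ into the tree structure of $D_1'$, while surgery along any other arc of $D_2$ cuts the dual-tree structure in a way incompatible with $\gamma$ and produces configurations killed by Theorem $3.4$.
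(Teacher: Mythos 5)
Your proposal is correct and follows essentially the same route as the paper's proof: you build the same $(k+l+1)$-crossing link with $\gamma$ and $\gamma'$ as the first two crossings, invoke the degree-$(k+l+1)$ Maurer--Cartan equation together with the inductive hypothesis on $h_j$ for $j<k+l$, evaluate at $\prod_{i=1}^s x_i$, and isolate the same two cube faces $(I,J_1)$ and $(I,J_2)$ to reduce the equality to a comparison between $F_{C'}$ and $F_C$. The only difference is that you spell out slightly more explicitly why the competing one-coordinate faces drop out (distinguishing the $x_i^2=0$ joins inside $D_1$ from the non-tree/dual-tree obstructions, including the surgery along $\gamma$ itself), which the paper compresses into a single sentence; this is elaboration rather than a distinct strategy.
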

\noindent Finally as shown in the previous lemma, given a resolution configuration $C$, we can produce $C'$, $C^{''}$,..., $C^{'(l)}$ inductively such that $C^{'(l)}$ is a $(k+l)$\hyp dimensional resolution configuration which is a disjoint union of trees. By Lemma $3.7$, we know that $F_{C^{'(l)}} \neq 0$. Hence, $F_{C}$ must be non\hyp zero in Sarkar-Seed-Szabo construction. 

\noindent Finally, we have the following corollary: 
  \begin{corollary}
  Sarkar-Seed-Szabo's extended terms $ h_i, i\geq 2 $ are unique upto naturality, disoriented, extension, and the filtration rule. 
 \end{corollary}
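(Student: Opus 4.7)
The plan is to deduce the corollary as a direct consequence of Theorem 3.5. Recall that each extended term $h_i$ appearing in $\delta_{Tot}$ is built combinatorially as $h_i = \sum_{|C|=i} F_C$, a sum over $i$-dimensional resolution configurations $C$ of associated maps $F_C : V_0(C) \to V_1(C)$ of bidegree $(i,2i)$. If $\{h'_i\}_{i \geq 2}$ is any competing family of endomorphisms satisfying naturality, the disoriented rule, the extension rule, and the filtration rule, I would first decompose each $h'_i$ into its constituent maps $F'_C$ indexed by $i$-dimensional configurations, so that uniqueness of $h'_i$ reduces to uniqueness of the individual $F'_C$.

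Next, I would verify that each $F'_C$ inherits the listed rules from $h'_i$: the extension and naturality rules are immediate, the filtration rule is a local constraint on circles that descends to each summand, and the disoriented rule descends similarly. The only hypothesis of Theorem 3.5 not explicitly listed in the corollary is the duality rule, which I would supply by invoking Lemma 3.2: starting from $F'_C$ satisfying the filtration and naturality rules, that lemma yields a replacement $\tilde{F}'_C$ satisfying filtration, naturality, and duality together, while matching $F'_C$ on the combinatorially essential monomials selected by the filtration rule on trees and dual trees.

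With all hypotheses of Theorem 3.5 now in force, I would apply it to conclude that $F'_C$ vanishes unless $C$ is a disjoint union of trees and dual trees, in which case $F'_C$ coincides with the Sarkar-Seed-Szabo formulas $(\ref{tree})$ and $(\ref{dual-tree})$ assembled via the tensor product construction immediately following them. Summing over $|C| = i$ then produces $h'_i = h_i$ for every $i \geq 2$, which is the desired uniqueness.

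The main obstacle I anticipate is the gap between the corollary's explicit hypothesis list and the hypotheses required by Theorem 3.5, namely the absence of the duality rule. Bridging this gap cleanly requires a careful appeal to Lemma 3.2, together with the observation from Lemmas 3.5 through 3.8 that the nonzero content of any such $F_C$ is already concentrated precisely on the monomials where the filtration rule is saturated, so the symmetrization in Lemma 3.2 does not disturb the essential combinatorial data and the equality $h'_i = h_i$ truly follows from the rules listed in the corollary.
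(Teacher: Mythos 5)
Your reduction to the level of individual resolution configurations is the right first move, and you have spotted something real in noticing that the corollary's hypothesis list omits the duality rule that Theorem 3.5 requires (both the unnumbered theorem in the introduction and Theorem 3.5 itself list duality explicitly, so the omission is most plausibly an oversight in the corollary's phrasing rather than a genuine weakening). However, your proposed route proves only half of the uniqueness claim, and the half that is missing is exactly what the material after Theorem 3.5 is there to supply.

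Theorem 3.5 is a conditional statement: it says that \emph{if} $F_C$ is non-zero and satisfies the rules in bidegree $(k,2k)$, \emph{then} it must agree with the Sarkar--Seed--Szabo formula. It does not force $F_C$ to be non-zero for any particular $C$. Consequently, after applying Theorem 3.5 you have only shown that each $F'_C$ is either zero or equal to the SSS value, and ``summing over $|C|=i$'' does not yet yield $h'_i = h_i$. For instance, the family $h'_i \equiv 0$, or any family obtained from $\{h_i\}$ by discarding some of the tree and dual-tree summands, satisfies every rule you verified and would be permitted by your argument. What rules these out is the requirement that $\delta_{Tot}$ square to zero: the Maurer--Cartan equation. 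The paper devotes the lemmas following Theorem 3.5 to precisely this point, proving that there is no proper sub-family of $\mathscr{C}$ (disjoint unions of trees and dual trees) that suffices to define $CTot(L)$. That argument proceeds by induction on dimension using the identity $\sum_{i+j=n+1}[d_i:h_j]=0$, the existence of an $n$-dimensional tree with $F_C\neq 0$, and the surgery constructions $C_1 * C_2$ and $C\mapsto C'$. You mention these lemmas in passing but mischaracterize their content as being about ``monomials where the filtration rule is saturated''; in fact their role is to show that all of the relevant $F_C$ are forced to be non-zero. A complete proof of the corollary needs to invoke the Maurer--Cartan condition explicitly and run through this non-vanishing argument.

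Separately, the bridge you propose via Lemma 3.2 does not quite do what you want. Lemma 3.2 produces a symmetrized map $\tilde{F}_C$ (equal to $F_C + F'_C$ when they differ), and Theorem 3.5 then constrains $\tilde{F}_C$, not $F_C$: in the self-dual case nothing in that lemma forces $F_C$ itself to agree with the SSS formula. The cleaner fix is simply to include the duality rule among the corollary's hypotheses, matching Theorem 3.5; if one genuinely wished to drop duality, one would need an argument that duality is implied by the remaining rules together with the Maurer--Cartan equation, and Lemma 3.2 on its own does not give that.
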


 \begin{remark}
 We do not know if the rules are required for the uniqueness statement. We constructed a  map $\alpha$ of bidegree $(2,4)$ satisfying \[ [d_1: \alpha]=0 \] and not satisfying the $filtration$ $rule$. But, unfortunately $\alpha$ does not satisfy $[h_1: \alpha]=0$. Otherwise, $h_{2}' := h_2 + \alpha $ would be another possible construction of $h_2$ not satisfying the $filtration$ $rule$. 
 \end{remark}
 \begin{remark}
 The Maurer-Cartan equation for $CTot(L)$ at the homological grading $2$ takes the following form: 
 \begin{equation*}
     [h_1:h_2]=0 \, \, \, \textit{and} \, \, \, [d_1:h_2]+[d_2:h_1]=0 
\end{equation*}
 We can interpret the above equations as, $ad(h_1)(h_2)=0$ and $ad(d_1)(h_2)=[d_2:h_1]$. Now, as $h_1^2=0$ and $d_1^2=0$, we can look at the following complexes, $End(CTot(L), ad(h_1))$ and $End(CTot(L), ad(d_1))$ where $End(CTot(.))$ is the space of all $F_2$ linear endomorphisms of $CTot(L)$. Thus, the first condition allows us to conclude that $h_2$ is a homology class of the first complex. On the other hand we notice: 
 \begin{equation*}
  ad(d_1)([d_2:h_1])= [d_1:[d_2:h_1]]=0   
 \end{equation*}
 Thus, the existence of $h_2$ implies that $[d_2:h_1]$ represents the zero homology class in $End(CTot(L),ad(d_1))$. In conclusion we showed that the Szabo's geometric conditions forces us a choose a unique element in the above homology classes of the complex $CTot(L)$. Fixing the choice of $h_2$, we can write down the Maurer-Cartan equation for the homological grading $3$ as: 
 \begin{equation*}
    [h_1:h_3]+h_2^{2}=0 \, \, \, \textit{and} \, \, \, [d_1:h_3]+[d_2:h_2]+[d_3:h_1]=0 
 \end{equation*}
 As before, we can interpret the above equations as, $ad(h_1)(h_3)=h_2^{2}$ and $ad(d_1)(h_3)= [d_2:h_2]+[d_3:h_1]$. 
 We notice that, $ad(h_1)(h_2^{2})=0$ and $ad(d_1)([d_2:h_2]+[d_3:h_1])=0$. This is because, 
 \begin{align*}
 ad(d_1)([d_2:h_2]+[d_3:h_1]) &= [d_1:[d_2:h_2]]+[d_1:[d_3:h_1]]\\
 & = d_1(d_2h_2+h_2d_2) + (d_2h_2+h_2d_2)d_1 + d_1(d_3h_1+h_1d_3)\\ & +(d_3h_1+h_1d_3)d_1 \\ 
 &=\left(d_1d_2h_2 + d_2h_2d_1 \right) + \left(d_1h_2d_2+h_2d_2d_1\right) \\ 
 & + (d_1d_3h_1+d_3h_1d_1) + (d_1h_1d_3+h_1d_3d_1)\\ 
 &= (d_2d_1h_2+d_2h_2d_1) + (d_1h_2d_2+h_2d_1d_2) \\
 & + (d_1d_3h_1+d_3d_1h_1) + (h_1d_1d_3+h_1d_3d_1) \\ 
& \textit{as} \, \, [d_1:d_2]=0 \, \, \, and \, \, \, [h_1:d_1]=0 \\ 
&=d_2([d_1:h_2])+ ([d_1:h_2])d_2+ ([d_1:d_3])h_1 + h_1([d_1:d_3])\\
&= d_2([d_2:h_1])+([d_2:h_1])d_2+ d_2^{2}h_1+h_1d_2^{2} \\ 
&\textit{as} \, \, [d_1:d_3]+d_2^2=0 \, \, \,  \textit{and}\, \, \, [d_1:h_2]=[d_2:h_1] \\ 
&= 0 
 \end{align*}
Thus, the existence of $h_3$ implies $h_2^{2}$ as an element of $(End(CTot(L), ad(h_1))$ and $[d_2:h_2]+[d_3:h_1]$ as an element of $(End(CTot(L),ad(d_1)))$ represent the zero element in the respective homology groups. Szabo's geometric conditions allow us to choose a unique solution $h_3$. Inductively, $h_{n}$ can be thought of as a choice of the unique element in the complex $\left (CTot(L)\right)$. 
 \end{remark}

\bibliographystyle{plain} \bibliography{reference.bib} 

\Addresses

\end{document}